\newcommand{\C}{\mathbb{C}}
\DeclareMathOperator{\Ad}{\mathrm{Ad}}
\DeclareMathOperator{\td}{\mathrm{td}}
\DeclareMathOperator{\ch}{\mathrm{ch}}
\DeclareMathOperator{\GL}{\mathrm{GL}}
\DeclareMathOperator{\SL}{\mathrm{SL}}
\DeclareMathOperator{\Sp}{\mathrm{Sp}}
\DeclareMathOperator{\SO}{\mathrm{SO}}
\DeclareMathOperator{\B}{\mathrm{B}}
\newtheorem{theorem}{Theorem}[section]
\newtheorem{proposition}[theorem]{Proposition}
\newtheorem{corollary}[theorem]{Corollary}
\newtheorem{example}[theorem]{Example}
\newtheorem{remark}[theorem]{Remark}
\newtheorem{lemma}[theorem]{Lemma}
\title[Character of Irreducible Representations Restricted to Finite Order Elements]
{ Character of Irreducible Representations Restricted to Finite Order Elements - An Asymptotic Formula}
\author{SHRAWAN KUMAR \& DIPENDRA PRASAD}
\date{}
\address{S. Kumar: Department of Mathematics, University of North Carolina, Chapel Hill, NC 27599-3250, USA}
\email{shrawan@email.unc.edu} 
\address{D. Prasad: Department of Mathematics, Indian Institute of Technology Bombay, Mumbai, India}
\email{prasad.dipendra@gmail.com}
\begin{document}
\maketitle{}

\noindent
{\bf Abstract:} Let $G$ be a connected reductive group over the complex numbers and let $T\subset G$ be a maximal torus. For any $t\in T$ of finite order and any irreducible representation $V(\lambda)$ of $G$ of highest weight $\lambda$, we determine the character 
$\ch(t, V(\lambda))$ by using the Lefschetz Trace Formula due to Atiyah-Singer and explicitly determining the connected components and their normal bundles
of the fixed point subvariety $(G/P)^t\subset G/P$  (for any parabolic subgroup $P$). This together with Wirtinger's theorem gives an asymptotic formula for  $\ch(t, V(n\lambda))$ when $n$ goes to infinity. 

\section{Introduction}

Let $(G,B,T)$ be a triple consisting of a connected reductive algebraic group $G$ over $\C$,  a Borel subgroup $B$
containing a maximal torus $T$. Their $\mathbb{C}$-points are denoted by the same symbols, i.e., $T=T(\mathbb{C})$ etc. Let $\lambda: T \rightarrow \C^\times$ be a dominant integral weight, and $\left(\pi_\lambda, V(\lambda)\right)$
the associated highest weight representation of $G$ with character $\Theta_\lambda: G \rightarrow \C$. The character
$\Theta_\lambda: G \rightarrow \C$ is a function of $\lambda$ as well as that of $g \in G$ which is determined by its restriction to $T$. In this paper, we
study the asymptotic behaviour of $\Theta_\lambda(t)$ as a function of $\lambda$ for a fixed $t \in T$ of finite order. For example, if $t=1$,
$\Theta_\lambda(1) = \dim \pi_\lambda$,  which by the Weyl dimension formula is a polynomial function in $\lambda$ of degree equal to $\dim(G/\B)$. At the other extreme, if $t \in T$ is regular (and of finite order), then  $\Theta_\lambda(t)$
is a piecewise constant function of $\lambda$, an assertion which the reader will immediately recognize as a consequence of the Weyl character formula. The aim of this paper is to prove, more generally, that if $t\in T$ is
an element of finite order  then $\Theta_\lambda(t)$ is a piecewise polynomial function in $\lambda$ of degree which is bounded above
by the dimension of the maximal unipotent subgroup of the centralizer $G^t=Z_{G}(t)$, a possibly disconnected reductive subgroup of $G$. Furthermore, in some situations, we prove that the bound is achieved (see the following corollary).

The paper is inspired by some works of the second author with other collaborators, cf. \cite{P}, \cite{NPP},
as well as \cite{AK} for classical groups, in which they calculate   $\Theta_\lambda(t)$
for $t \in T$, a power of the Coxeter conjugacy class (which is the unique regular conjugacy class
in the derived group of $G$
of minimal order in the adjoint group, equal to the Coxeter number of the corresponding Weyl group), and find that
$\Theta_\lambda(t)$ is either zero or is, up to a sign, the dimension of an irreducible representation of the identity component $Z_G(t)^o$ 
of the centralizer $Z_{G}(t)$.
The present paper is less precise on concrete character values, but gives an asymptotic formula for $\Theta_\lambda(t)$ for all $t\in T$ of finite order (see the following corollary).
Precise calculation of the character values at elements of order 2 (for classical groups and $G_2$) is made in a recent Ph. D. thesis  of Karmakar, see \cite{Ka}. 

Let $\mathcal{L}(\lambda)=\mathcal{L}_P(\lambda):=G\times^P\mathbbm{C}_{-\lambda}\rightarrow X_P$ be the homogeneous line bundle over $X_P=G/P$ associated to a character $\lambda$ of $P$, where $P\supset B$ is a (standard) parabolic subgroup. 

The present paper uses the Lefschetz Trace Formula due to Atiyah-Singer  for the action of $t \in T$ on a homogeneous line bundle over $G/B$, and more generally
on $X_P$ to calculate the character  $\Theta_\lambda(t)$. Specifically, our main theorem of this note is the following result (cf. Theorem \ref{thm2.1}).

Fix a dominant character $\lambda$ of $T$ and 
let $P=P_\lambda$ be the unique standard parabolic subgroup of $G$ such that $\mathcal{L}(\lambda)$ is an ample line bundle over $X_P$. 

\begin{theorem}
For any $t\in T$ of finite order and any integer $n\geqslant 0$, 
$$\Theta_{n\lambda}(t^{-1})=\sum_{v\in Y_t} D_v^t(n,\lambda),$$
where
$$D_v^t(n,\lambda)=
\sum_{k\geqslant 0} \int_{X_P^t(v)}\,\,\frac{c_1\left(\mathcal{L}(\lambda)_{|_{X_P^t(v)}}\right)^k\frac{n^k}{k!}e^{-nv\lambda}(t)\cdot \td(X_P^t(v))}{\prod_{\alpha\in(vR^+_P)\backslash R((G^t)^o)}\, [1-e^\alpha(t)e^{- c_1(\overline{\mathcal{L}}(\alpha)_{|_{X_P^t(v)}})}]},
$$
where $Y_t$ denotes the set of connected components of  $Z_{G}(t)$, 
$R^+_P$ is the set of roots of the unipotent radical of $P$, $\td$ is the Todd genus of the tangent bundle, the notation $Y_t$ and $X_P^t(v)$ are explained in Proposition \ref{pps1.1} and the notation $\overline{\mathcal{L}}(\alpha)$ is as in the identification \eqref{eq12}. 
\end{theorem}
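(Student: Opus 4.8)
The plan is to realize $\Theta_{n\lambda}(t^{-1})$ as a holomorphic Lefschetz number on $X_P=G/P$ and to evaluate it by the Atiyah–Singer holomorphic Lefschetz fixed point formula, feeding in the description of the fixed locus $X_P^t$ and its normal bundle supplied by Proposition \ref{pps1.1}.

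First I would pass from representation theory to geometry via Borel–Weil–Bott. Since $\lambda$ is dominant and $P=P_\lambda$ is chosen so that $\mathcal L(\lambda)$ is ample on $X_P$, the line bundle $\mathcal L(n\lambda)$ is ample (or trivial, when $n=0$) for every $n\geqslant 0$; hence $H^i(X_P,\mathcal L(n\lambda))=0$ for $i>0$ while $H^0(X_P,\mathcal L(n\lambda))$ realizes the irreducible representation $\pi_{n\lambda}$. The element $t$ acts on $X_P$ by translation and lifts canonically to the $G$-equivariant bundle $\mathcal L(n\lambda)$; with the normalization $\mathcal L(\mu)=G\times^P\C_{-\mu}$ used here, pullback along translation-by-$t$ acts on $H^0$ with trace $\Theta_{n\lambda}(t^{-1})$ — the inverse appears because translating by $t$ pulls a section back by $t^{-1}$. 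Therefore
$$\Theta_{n\lambda}(t^{-1})=\sum_{i\geqslant 0}(-1)^i\operatorname{tr}\bigl(t^{*}\mid H^i(X_P,\mathcal L(n\lambda))\bigr)$$
is the holomorphic Lefschetz number of $t$ acting on $\mathcal L(n\lambda)$ over $X_P$.

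Next I would apply the Atiyah–Singer holomorphic Lefschetz fixed point theorem to the finite-order holomorphic automorphism $t$ of the smooth projective variety $X_P$. By Proposition \ref{pps1.1} the fixed locus $X_P^t$ is the disjoint union of the smooth closed subvarieties $X_P^t(v)$, $v\in Y_t$, and the fixed point formula gives
$$\Theta_{n\lambda}(t^{-1})=\sum_{v\in Y_t}\ \int_{X_P^t(v)}\ \frac{\ch_t\bigl(\mathcal L(n\lambda)|_{X_P^t(v)}\bigr)\cdot\td\bigl(X_P^t(v)\bigr)}{\ch_t\bigl(\sum_{i\geqslant 0}(-1)^i\Lambda^i N^{\vee}\bigr)},$$
where $N=N_{X_P^t(v)/X_P}$ is the normal bundle and $\ch_t$ is the Chern character of a $t$-equivariant bundle with each $t$-eigensubbundle weighted by the corresponding eigenvalue of $t$. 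It then remains to make the two pieces of the integrand explicit using Proposition \ref{pps1.1} and \eqref{eq12}. On $X_P^t(v)$ the line bundle $\mathcal L(\lambda)$ is homogeneous for $(G^t)^o$ with $t$ acting on fibers by the scalar $e^{-v\lambda}(t)$, so $\mathcal L(n\lambda)|_{X_P^t(v)}$ lies in one $t$-eigenspace and
$$\ch_t\bigl(\mathcal L(n\lambda)|_{X_P^t(v)}\bigr)=e^{-nv\lambda}(t)\,e^{\,n\,c_1(\mathcal L(\lambda)|_{X_P^t(v)})}=\sum_{k\geqslant 0}\frac{n^k}{k!}\,c_1\bigl(\mathcal L(\lambda)|_{X_P^t(v)}\bigr)^k\,e^{-nv\lambda}(t),$$
a sum that truncates upon integration over the finite-dimensional $X_P^t(v)$. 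Likewise, by Proposition \ref{pps1.1} and the identification \eqref{eq12}, the normal bundle splits as $N=\bigoplus_{\alpha}\overline{\mathcal L}(\alpha)|_{X_P^t(v)}$ over $\alpha\in(vR^+_P)\backslash R((G^t)^o)$ — exactly the root directions along which $t$ acts non-trivially, with $t$-weight $e^{-\alpha}(t)$ on the $\alpha$-summand — whence
$$\ch_t\Bigl(\sum_{i\geqslant 0}(-1)^i\Lambda^i N^{\vee}\Bigr)=\prod_{\alpha\in(vR^+_P)\backslash R((G^t)^o)}\bigl[1-e^{\alpha}(t)\,e^{-c_1(\overline{\mathcal L}(\alpha)|_{X_P^t(v)})}\bigr].$$
Substituting the last two displays into the fixed point formula identifies the $X_P^t(v)$-contribution with $D_v^t(n,\lambda)$, and summing over $v\in Y_t$ yields the theorem.

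Granting Proposition \ref{pps1.1}, the work that remains is confined to this last step, and I expect it to be a matter of carefully matching conventions rather than of new ideas: one must verify (i) that translation by $t$ induces $\pi_{n\lambda}(t^{-1})$, not $\pi_{n\lambda}(t)$, on $H^0$ given the sign in $\mathcal L(\mu)=G\times^P\C_{-\mu}$, and (ii) that the $t$-eigenvalue on each normal line $\overline{\mathcal L}(\alpha)$ is precisely the one making the Atiyah–Singer denominator equal $1-e^{\alpha}(t)e^{-c_1(\overline{\mathcal L}(\alpha))}$ (equivalently, that the conormal weight is $e^{\alpha}(t)$); both are pinned down by the explicit description in Proposition \ref{pps1.1} and \eqref{eq12}. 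The only point requiring attention at the level of the fixed point formula itself is that it be invoked for a fixed locus that is in general disconnected and of mixed dimension — which is exactly the generality of the Atiyah–Singer formula for a finite-order automorphism of a smooth projective variety, and so poses no obstacle.
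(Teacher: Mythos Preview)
Your proposal is correct and follows essentially the same approach as the paper: identify $\Theta_{n\lambda}(t^{-1})$ with the holomorphic Lefschetz number of $t$ on $\mathcal L(n\lambda)$ via Borel--Weil--Bott, apply the Atiyah--Singer fixed point formula \cite[Theorem 4.6]{AS}, and then plug in the explicit descriptions of $\mathcal L(n\lambda)|_{X_P^t(v)}$ (equation \eqref{eq9}) and the normal bundle (equation \eqref{eq12}). The only minor discrepancy is that the paper records $H^0(X_P,\mathcal L(n\lambda))\cong V(n\lambda)^*$ (so that $\operatorname{tr}(t\mid H^0)=\ch(t,V(n\lambda)^*)=\ch(t^{-1},V(n\lambda))$), whereas you say $H^0$ realizes $\pi_{n\lambda}$ and obtain the inverse from the pullback convention; this is precisely the bookkeeping you flagged in point (i), and either route lands on the same formula.
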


This theorem takes a simpler form when $P=B$ since in this case each $X_P^t(v)$ is isomorphic with the full flag variety of $Z_G(t)^o$ (cf. Corollary \ref{cry2.4}).

\vskip1ex

Let $d_v$ be the complex dimension of $X_P^t(v)$ and let $r$ be the order of $t\in T$. Consider the function 
$$
\chi^t_\lambda:~\mathbbm{Z}_{\geqslant 0}\rightarrow \mathbbm{C},\quad n \mapsto \Theta_{n\lambda}(t^{-1}).
$$
As an immediate consequence of the above theorem and Wirtinger Theorem,  we obtain the following corollary (cf. Corollaries \ref{cry2.2} and  \ref{cry2.5}).

\begin {corollary}
For any fixed $0\leq p<r$, $\chi_{\lambda_{|\left\{n\equiv p(mod\,r)\right\}}}^t$ is a polynomial function in $n$ of degree
$\leq \max_{v\in Y_t}\left\{d_v\right\}$.

Moreover, the coefficient of $n^{d_v}$ in $D_v^t(n,\lambda)$ restricted to $\{ n\equiv p (mod\, r) \}$ is equal to 
\begin{equation}\label{eq14}
\frac{1}{d_v!}\int_{X_P^t(v)} c_1 \left(\mathcal{L}(\lambda)_{|_{X_P^t(v)}}\right)^{d_v} \frac{e^{-pv\lambda}(t)}{\prod_{\alpha\in \left(v R^+_P\right)\backslash R\left(\left(G^t\right)^o\right)}\left(1-e^{\alpha}(t)\right)}.
\end{equation}

In particular, if  there is a unique $v_o\in Y_t$ such that $d_{v_o}=  \max_{v\in Y_t}\left\{d_v\right\}$. Then, for any fixed $0\leq p< r$, 
$$(\chi^t_\lambda)_{|_{n\equiv p (mod \ r)}}$$
is a polynomial function of degree exactly equal to $d_{v_o}$.
\end{corollary}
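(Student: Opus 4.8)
The corollary has three parts: (1) polynomiality in $n$ of degree $\leq \max_v d_v$ along each residue class mod $r$; (2) the explicit formula for the leading coefficient; (3) the "in particular" statement about exact degree when the maximum is achieved uniquely.

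Let me think about what's available. We have the main theorem giving $\Theta_{n\lambda}(t^{-1}) = \sum_{v \in Y_t} D_v^t(n,\lambda)$ where
$$D_v^t(n,\lambda) = \sum_{k \geq 0} \int_{X_P^t(v)} \frac{c_1(\mathcal{L}(\lambda)|_{X_P^t(v)})^k \frac{n^k}{k!} e^{-nv\lambda}(t) \cdot \td(X_P^t(v))}{\prod_{\alpha \in (vR_P^+)\setminus R((G^t)^o)} [1 - e^\alpha(t) e^{-c_1(\bar{\mathcal{L}}(\alpha)|_{X_P^t(v)})}]}.$$

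Key observations for the analysis:
- $X_P^t(v)$ has complex dimension $d_v$, so any integral over it picks out the degree-$d_v$ (i.e., degree-$2d_v$ in real cohomology) part of the integrand.
- $e^{-nv\lambda}(t)$ is a scalar (a root of unity to a power). Since $t$ has order $r$, $e^{-nv\lambda}(t) = e^{-(n \bmod r)v\lambda}(t)$ — this depends only on $n \bmod r$. Wait, I need to be careful: $e^{v\lambda}$ is a character of $T$, and $(e^{v\lambda}(t))^n$ depends only on $n$ modulo the order of $e^{v\lambda}(t)$, which divides $r$. So restricting to $n \equiv p \pmod r$ makes this factor constant, equal to $e^{-pv\lambda}(t)$.
- The denominator: each factor $[1 - e^\alpha(t) e^{-c_1(\bar{\mathcal{L}}(\alpha))}]$ — here $e^\alpha(t)$ is a scalar root of unity (NOT equal to 1, since $\alpha \notin R((G^t)^o)$ means $e^\alpha(t) \neq 1$; this is exactly why these roots are excluded, and it's what makes the denominator invertible). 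We expand $e^{-c_1(\bar{\mathcal{L}}(\alpha))} = 1 - c_1 + c_1^2/2 - \cdots$, so each factor is $(1 - e^\alpha(t)) - e^\alpha(t)(\text{higher order in } c_1)$, with constant term $(1 - e^\alpha(t)) \neq 0$, hence invertible in the cohomology ring (nilpotent perturbation of a unit).
- $\td(X_P^t(v))$ has constant term $1$.
- The numerator has $c_1(\mathcal{L}(\lambda))^k n^k/k!$.

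So $D_v^t(n,\lambda)$ is the integral over $X_P^t(v)$ of: a scalar depending on $n \bmod r$ (coming from $e^{-nv\lambda}(t)$), times $\sum_k c_1(\mathcal{L}(\lambda))^k n^k/k!$, times a power series in the $c_1(\bar{\mathcal{L}}(\alpha))$'s and Chern classes of the tangent bundle that does NOT involve $n$ at all (the denominator and $\td$ are $n$-independent). Picking out the top degree $d_v$ component: the only source of $n$ is $\sum_k c_1(\mathcal{L}(\lambda))^k n^k/k!$. The integral kills everything of cohomological degree $> d_v$, so effectively we sum over $k \leq d_v$, and the coefficient of $n^k$ involves an integral of $c_1(\mathcal{L}(\lambda))^k$ against a fixed cohomology class. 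Hence $D_v^t(n,\lambda)$ restricted to $n \equiv p \pmod r$ is a polynomial in $n$ of degree $\leq d_v$. Summing over $v$ gives part (1).

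**The plan:**

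First I would fix a residue $p$ with $0 \leq p < r$ and restrict attention to $n \equiv p \pmod r$. The first step is to observe that $e^{-nv\lambda}(t)$, being the $n$-th power of a root of unity whose order divides $r$, equals the constant $e^{-pv\lambda}(t)$ on this residue class; thus in $D_v^t(n,\lambda)$ the only remaining $n$-dependence is through the factor $\sum_{k\geq 0} c_1(\mathcal{L}(\lambda)|_{X_P^t(v)})^k \tfrac{n^k}{k!}$ in the numerator. Second, I would expand the denominator: since $\alpha \notin R((G^t)^o)$ forces $e^\alpha(t)\neq 1$, each factor $[1 - e^\alpha(t)e^{-c_1(\bar{\mathcal{L}}(\alpha))}]$ has invertible constant term $(1 - e^\alpha(t))$ in $H^*(X_P^t(v))$ and is a unit times $(1 + \text{nilpotent})$; likewise $\td(X_P^t(v))$ has constant term $1$. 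So the whole quotient, excluding the numerator's $\mathcal{L}(\lambda)$-power, is a fixed (independent of $n$) cohomology class $\Phi_v \in H^*(X_P^t(v))$ with $\Phi_v = e^{-pv\lambda}(t)\big/\prod_\alpha(1 - e^\alpha(t)) + (\text{higher degree})$.

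Third, I would integrate: $\int_{X_P^t(v)}$ annihilates cohomology of degree $\neq 2d_v$, so $D_v^t(n,\lambda)\big|_{n\equiv p} = \sum_{k=0}^{d_v} \frac{n^k}{k!}\int_{X_P^t(v)} c_1(\mathcal{L}(\lambda))^k \cdot \Phi_v^{(d_v - k)}$ where $\Phi_v^{(j)}$ denotes the degree-$2j$ part of $\Phi_v$. This is manifestly a polynomial in $n$ of degree $\leq d_v$, proving part (1) after summing over $v \in Y_t$. For part (2), the coefficient of $n^{d_v}$ is $\frac{1}{d_v!}\int_{X_P^t(v)} c_1(\mathcal{L}(\lambda))^{d_v}\cdot \Phi_v^{(0)} = \frac{1}{d_v!}\int_{X_P^t(v)} c_1(\mathcal{L}(\lambda))^{d_v}\cdot \frac{e^{-pv\lambda}(t)}{\prod_{\alpha}(1 - e^\alpha(t))}$, which is exactly formula \eqref{eq14}. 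Finally, for the "in particular" clause: by Wirtinger's theorem (invoked in the statement), since $\mathcal{L}(\lambda)$ is ample on $X_P$, its restriction to the closed subvariety $X_P^t(v)$ has $\int_{X_P^t(v)} c_1(\mathcal{L}(\lambda))^{d_v} > 0$; as the scalar $\frac{e^{-pv\lambda}(t)}{\prod_\alpha(1-e^\alpha(t))}$ is nonzero, the coefficient of $n^{d_v}$ in $D_v^t(n,\lambda)$ is nonzero for every $v$. Wait — I need to handle that the $D_v$'s could individually contribute at top degree for several $v$ with $d_v$ maximal and the contributions could conceivably cancel after summing; this is precisely why the uniqueness hypothesis on $v_o$ is imposed. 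Under that hypothesis only $D_{v_o}^t(n,\lambda)$ contributes to degree $d_{v_o}$, and its leading coefficient is the nonzero number above, so $(\chi^t_\lambda)|_{n\equiv p}$ has degree exactly $d_{v_o}$.

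**Main obstacle.** The one genuinely delicate point is the positivity/non-vanishing of $\int_{X_P^t(v)} c_1(\mathcal{L}(\lambda)|_{X_P^t(v)})^{d_v}$. This needs that $\mathcal{L}(\lambda)$ restricted to the connected component $X_P^t(v)$ is ample (or at least has positive top self-intersection); ampleness of $\mathcal{L}(\lambda)$ on $X_P$ restricts to ampleness on any closed subvariety, and then Wirtinger's theorem (or just Nakai–Moishezon / the Kähler identity $\int \omega^{d_v} = d_v! \cdot \mathrm{vol} > 0$) gives strict positivity. Everything else is bookkeeping: tracking which factors depend on $n$ and which do not, and using that the integral over a $d_v$-dimensional variety is a degree projection. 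A minor subtlety worth a sentence is confirming that $e^\alpha(t)\neq 1$ for $\alpha\in (vR_P^+)\setminus R((G^t)^o)$ so that the denominator is legitimately invertible in $H^*(X_P^t(v); \C)$ — but this is immediate from the definition of $R((G^t)^o)$ as the set of roots trivial on $t$.
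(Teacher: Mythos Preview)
Your proposal is correct and follows essentially the same approach as the paper: the paper's proof is extremely terse, simply saying that the polynomiality follows ``immediately'' from Theorem~\ref{thm2.1}, that formula~\eqref{eq14} follows from Theorem~\ref{thm2.1} together with the constant term of the Todd genus being $1$, and that the ``in particular'' clause follows from these plus the Wirtinger positivity in Remark~\ref{rmk2.3}. Your write-up is a careful unpacking of exactly these steps---isolating the $n$-dependence in $e^{-nv\lambda}(t)$ and $\sum_k c_1^k n^k/k!$, inverting the denominator via its nonzero constant term $1-e^\alpha(t)$, extracting the top-degree coefficient, and invoking Wirtinger plus the uniqueness hypothesis to rule out cancellation---so there is no substantive difference in method.
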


As a consequence of the above corollary,  we obtain the following result (cf. Theorem \ref{thm3.1}).
\begin {theorem}
Assume  that $t\in T$ is of order 2. Then, the function $\left(\chi_{\lambda}^t\right)_{|\{ n\in 2\mathbbm{Z}_{\geq0}\}}$ is a polynomial function of degree exactly equal to $d:= max_{v\in Y_t}\left\{d_v\right\}$. 
\end {theorem}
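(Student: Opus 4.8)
The plan is to read off the top-degree coefficient of the polynomial directly from the preceding corollary and then check that, for $t$ of order $2$, it cannot vanish. Observe that $\{n\in 2\mathbbm{Z}_{\geq 0}\}$ is precisely the set of $n$ with $n\equiv 0\pmod{2}$, i.e. the case $r=2$, $p=0$ of the corollary. Hence $(\chi^t_\lambda)_{|\{n\in 2\mathbbm{Z}_{\geq 0}\}}$ is a polynomial in $n$ of degree at most $d=\max_{v\in Y_t}\{d_v\}$, and, since the summands $D_v^t$ with $d_v<d$ contribute nothing in degree $d$ while $e^{-pv\lambda}(t)=1$ for $p=0$, the coefficient of $n^{d}$ in it equals
\[
\sum_{\substack{v\in Y_t\\ d_v=d}}\ \frac{1}{d!}\int_{X_P^t(v)} c_1\!\left(\mathcal{L}(\lambda)_{|_{X_P^t(v)}}\right)^{d}\ \frac{1}{\prod_{\alpha\in \left(vR^+_P\right)\backslash R\left(\left(G^t\right)^o\right)}\bigl(1-e^{\alpha}(t)\bigr)} .
\]
It therefore suffices to prove that this number is strictly positive; then, combined with the degree bound from the corollary, the degree is exactly $d$.

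The next step is to simplify the denominators using $t^2=1$. For a root $\alpha\notin R\left(\left(G^t\right)^o\right)$ one has $e^\alpha(t)\neq 1$, because the root system of the reductive group $\left(G^t\right)^o$ is exactly $\{\alpha\in R: e^\alpha(t)=1\}$; as $e^\alpha(t)$ is then a square root of unity different from $1$, it must equal $-1$. So each factor $1-e^\alpha(t)$ equals $2$, and the product in the denominator equals $2^{N_v}$ where $N_v=\bigl|\left(vR^+_P\right)\backslash R\left(\left(G^t\right)^o\right)\bigr|\geq 0$; in particular it is a positive real number. Thus the $v$-th summand becomes $\tfrac{1}{d!\,2^{N_v}}\int_{X_P^t(v)} c_1\!\left(\mathcal{L}(\lambda)_{|_{X_P^t(v)}}\right)^{d}$.

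Finally I would invoke ampleness together with Wirtinger's theorem. By the standing choice $P=P_\lambda$, the line bundle $\mathcal{L}(\lambda)$ is ample on $X_P$, hence its restriction to the closed subvariety $X_P^t(v)$ is ample; therefore $\int_{X_P^t(v)} c_1\!\left(\mathcal{L}(\lambda)_{|_{X_P^t(v)}}\right)^{d}$ is the top self-intersection number of an ample class on the $d$-dimensional projective variety $X_P^t(v)$ --- equivalently, by Wirtinger, $d!$ times the volume of $X_P^t(v)$ in a Kähler metric representing $c_1(\mathcal{L}(\lambda))$ --- and so is strictly positive. Consequently every term in the displayed sum is strictly positive, the sum is nonzero, and the degree of $(\chi^t_\lambda)_{|\{n\in 2\mathbbm{Z}_{\geq 0}\}}$ is exactly $d$.

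The only place where something genuine happens --- and the main obstacle --- is ruling out cancellation among the top-dimensional fixed components $X_P^t(v)$: a priori their contributions to the coefficient of $n^{d}$ are complex numbers that could add up to $0$. What makes the order-$2$, $p=0$ case work is precisely that all the ingredients --- the value $e^{-pv\lambda}(t)$, the factors $1-e^\alpha(t)$, and the integrals $\int_{X_P^t(v)} c_1(\cdot)^{d}$ --- are real and of one sign, so cancellation is impossible; for $t$ of higher order, or for a nonzero residue $p$, these become honest roots of unity and this simple positivity argument breaks down.
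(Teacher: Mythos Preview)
Your proof is correct and follows essentially the same route as the paper's: use Corollary~\ref{cry2.2} with $p=0$, observe that for $t$ of order $2$ and $\alpha\notin R((G^t)^o)$ one has $1-e^\alpha(t)=2$, and invoke Wirtinger (Remark~\ref{rmk2.3}) to see each top-degree contribution is strictly positive so no cancellation can occur. If anything, you are slightly more explicit than the paper in isolating the sum over those $v$ with $d_v=d$ and in articulating why the argument breaks down for higher-order $t$ or $p\neq 0$.
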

\vskip1ex

To prove the above results,  we prove several assertions of
independent interest regarding the relationship of the fixed points of the action of $t \in T$ on $G/P$ with the flag
varieties of  the identity component $Z_{G}(t)^o$ of  $Z_{G}(t)$: each connected component of the fixed point set is a homogeneous space for  $Z_{G}(t)^o$ (cf. Section 2, especially Proposition \ref{pps1.1} and Corollary \ref{cry1.2}). In the case $P=B$, each connected component of the fixed point set $X^t$ is isomorphic with the full flag variety of $Z_G(t)^o$ and there are exactly $\frac{\# W}{\# W(Z_G(t)^o)}$ many connected components (cf. Corollary \ref{cry1.2} and Lemma \ref{lem1.5}). 

\vskip1ex
After we completed this work, G. Lusztig pointed out his article `Michael Atiyah and Representation Theory' in \cite{H}, where he mentioned an application of the Lefschetz trace formula due to Atiyah-Singer \cite{AS} to get an explicit formula for the character of any irreducible  representation $V$ of $G$ at an element $t$ of finite order of $G$ as a sum of contributions of the connected components of the fixed point set of $t$ on the flag manifold of $G$. In this note we determine the connected components explicitly and thus work out such an explicit formula for the character $\ch (t, V)$.
\vskip1ex

\noindent
{\bf Acknowledgements:} We thank G. Lusztig and M. Vergne for their comments. In particular, Remark \ref{rmk2.2} is in response to a question by M. Vergne in a private communication.

\section{Fixed points of a flag variety under the action of a subgroup of torus}

Let $T\subset B\subset P\subset G$ be a maximal torus, a Borel subgroup, a standard parabolic in a connected reductive group $G$ over $\mathbbm{C}$. Let $S\subset T$ be a subgroup (not necessarily connected) and let $X_P=G/P$ be the partial flag variety. Then, $S$ acts on $X_P$ via left multiplication. Let $X^S_P$ be the fixed subvariety. Then, $X_P^S$ is smooth. Let $W^P$ be the set of minimal coset representative in $W/W_P$, where $W$ is the Weyl group of $G$ and $W_P$ is the Weyl group of $P$ (which is by definition the Weyl group of its Levi component). Let $(G^S)^o$ be the identity component of the fixed subgroup $G^S$ under the conjugation action of $S$ on $G$. 

\begin {proposition}\label{pps1.1}
With the  notation as above, the set of connected components of $X_P^S$ is bijectively parameterized by 
$$
Y_S:=\left\{v\in W^P:~B^S\cdot vP/P=vP/P \right\}.
$$

For $v\in Y_S$, let $X_P^S(v)$ be the corresponding connected component of $X_P^S$. Then, the morphism
$$
\hat{\phi}_v:(G^S)^o\longrightarrow X^S_P, \quad g \mapsto g\cdot vP/P
$$
descends to a $(G^S)^o$-equivariant variety isomorphism
$$
\phi_v:(G^S)^o/\left((G^S)^o\cap(vPv^{-1})\right)\overset{\sim}{\longrightarrow} X_P^S(v).
$$
Observe that the Borel subgroup $B^S\subset (G^S)^o\cap (vPv^{-1})$, since $v\in Y_S$. Moreover, since $X_P^S$ is smooth, its connected components coincide with its irreducible components.
\end {proposition}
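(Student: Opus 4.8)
The plan is to analyze the $S$-fixed points of $X_P = G/P$ by using the Bruhat decomposition $G/P = \bigsqcup_{w\in W^P} BwP/P$ and the fact that $S\subset T$ acts compatibly with this stratification. First I would observe that since $S$ is a diagonalizable group acting on the smooth projective variety $X_P$, the fixed locus $X_P^S$ is smooth (this is classical, e.g. by linearizing the action near a fixed point), so its connected and irreducible components agree; this disposes of the last sentence. Next, each $T$-fixed point of $X_P$ is of the form $vP/P$ with $v\in W^P$, and these are exactly the $S$-fixed points that are isolated in their Schubert cell; I would show more precisely that a point $xP/P$ lies in $X_P^S$ only in a cell $BwP/P$ whose base point $wP/P$ is itself $S$-fixed, by looking at the limit under a generic one-parameter subgroup of $T$ — the $S$-action commutes with the $\mathbb{G}_m$-flow, so the flow preserves $X_P^S$ and carries each fixed point to a $T$-fixed point in the closure of its cell.

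The heart of the argument is the local model at each $vP/P$. Consider the $S$-stable open cell $U_v := v^- \cdot vP/P$ where $v^-$ is the unipotent radical opposite to $vPv^{-1}$; this is an $S$-stable affine space with tangent space at $vP/P$ equal to $\mathfrak{g}/\mathrm{Ad}(v)\mathfrak{p}$, on which $S$ acts with weights the roots in $v R^+_P$ (using the notation of the statement). By Bialynicki-Birula / the slice theorem for diagonalizable group actions, $U_v^S$ is the linear subspace spanned by the zero-weight spaces, and the connected component of $X_P^S$ through $vP/P$ is this linear piece — but it is nonempty as a component of $X_P^S$ (i.e. $vP/P$ genuinely lies in a positive-dimensional or zero-dimensional component accounted for exactly once) precisely when $vP/P$ is the unique $T$-fixed point in that component, which is the condition that $B^S$ fixes $vP/P$, i.e. $v\in Y_S$. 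I would check the equivalence ``$B^S\cdot vP/P = vP/P$'' $\iff$ ``$vP/P$ is the attractive $T$-fixed point in its component of $X_P^S$'' by noting $B^S = (B\cap G^S)$ is a Borel of $(G^S)^o$ and its action on the slice $U_v^S$ has $vP/P$ as its unique fixed point iff all zero-$S$-weight roots in $v R^+_P$ are actually positive for a chosen ordering — equivalently iff $B^S \subset (G^S)^o \cap vPv^{-1}$.

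Finally, for $v\in Y_S$ I would prove the isomorphism statement. The orbit map $\hat\phi_v$ has image inside $X_P^S$ since $(G^S)^o$ commutes with $S$, and its differential at the identity is the projection $\mathrm{Lie}(G^S)^o \to \big(\mathfrak{g}/\mathrm{Ad}(v)\mathfrak{p}\big)^S = T_{vP/P}(X_P^S(v))$, which is surjective because the zero-$S$-weight part of $\mathfrak{g}$ is exactly $\mathrm{Lie}(G^S)^o$ and it surjects onto the zero-weight part of the quotient; hence the orbit $(G^S)^o \cdot vP/P$ is open in $X_P^S(v)$. Since $X_P^S(v)$ is irreducible and the orbit is also closed (orbits of parabolic-type stabilizers in complete varieties — here $(G^S)^o \cap vPv^{-1}$ contains the Borel $B^S$, so the quotient is projective), the orbit is all of $X_P^S(v)$, giving the $(G^S)^o$-equivariant isomorphism $\phi_v$ with the stabilizer being exactly $(G^S)^o \cap vPv^{-1}$. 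The bijection between $Y_S$ and connected components then follows since every component contains at least one $T$-fixed point $vP/P$ (take the source of the $\mathbb{G}_m$-flow) and that point satisfies $v\in Y_S$, while distinct $v\in Y_S$ give distinct components because each $vP/P$ lies in a single Schubert cell and the component through it is contained in a neighborhood meeting only that cell's base point among $T$-fixed points. The main obstacle I anticipate is the bookkeeping in the second paragraph: cleanly proving that $B^S$-fixing $vP/P$ is equivalent to $v$ indexing a genuine (and uniquely labeled) component — i.e. that we are not over- or under-counting — which requires carefully relating the Bruhat order on $W^P$ to the $\mathbb{G}_m$-flow on $X_P^S$ and checking the Borel subgroups match up as claimed.
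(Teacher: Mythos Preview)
Your overall strategy is sound but differs from the paper's in the key step, and your injectivity argument contains a genuine error.

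\textbf{Comparison.} To show that each component of $X_P^S$ is a single $(G^S)^o$-orbit, the paper proves directly that \emph{every} $(G^S)^o$-orbit in $X_P^S$ is closed, via a root/reflection argument: pick a $T$-fixed point $v_oP/P$ of minimal length in a given orbit; if $B^S$ does not fix it, there is a positive root $\alpha$ with $\mathcal{U}_\alpha\subset B^S$ moving $v_oP/P$, from which one deduces $s_\alpha v_oP/P$ lies in the same orbit with $s_\alpha v_o<v_o$, contradicting minimality. Your route is different: for $v\in Y_S$ the stabilizer $(G^S)^o\cap vPv^{-1}$ contains the Borel $B^S$, hence is parabolic in $(G^S)^o$, so the quotient is projective and the orbit is closed; combined with your tangent-space surjectivity (openness) and irreducibility, this yields the isomorphism $\phi_v$ without the paper's combinatorics. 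This is slicker, though it only handles orbits through points of $Y_S$; you make up for this with the Bialynicki--Birula source to produce such a $v$ in every component.

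\textbf{The error.} Your injectivity argument fails. You claim that ``the component through $vP/P$ is contained in a neighborhood meeting only that cell's base point among $T$-fixed points,'' and earlier that $vP/P$ is ``the unique $T$-fixed point in that component.'' Both are false: $X_P^S(v)\cong (G^S)^o/\bigl((G^S)^o\cap vPv^{-1}\bigr)$ is a partial flag variety for $(G^S)^o$ and in general contains $\#\bigl(W((G^S)^o)/W((G^S)^o\cap vPv^{-1})\bigr)$ many $T$-fixed points (cf.\ Lemma~\ref{lem1.5}); for instance when $S=\{e\}$ the single component $X_P$ contains all of them. The statement you want is that $vP/P$ is the unique \emph{$B^S$-fixed} point (equivalently, the unique attractive fixed point for a one-parameter subgroup dominant with respect to $B^S$) in its component. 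This is exactly what the paper uses for injectivity: a compact homogeneous variety for the connected reductive group $(G^S)^o$ has a unique fixed point under any Borel subgroup, by the Bruhat decomposition. With this correction your argument goes through.
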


\begin {proof}
Take a connected component $\mathcal{Z}\subset X_P^S$. It is clearly $B^S$-stable and hence, by Borel Fixed Point Theorem, contains a $B^S$-fixed point $vP/P$ (for some $v\in W^P$) since a $B^S$-fixed point, in particular, is a $T$-fixed point. We claim that the map 
$$
\hat{\phi}_v:~(G^S)^o\rightarrow X_P^S, \quad g\mapsto g\cdot vP/P
$$
induces an isomorphism
$$
\phi_v:(G^S)^o/\left((G^S)^o\cap(vPv^{-1})\right)\overset{\sim}{\longrightarrow} \mathcal{Z}:
$$

By the Bruhat decomposition, we get 
$$
X_P=\sqcup_{w\in W^P}BwP/P, 
$$
and hence
\begin{equation}\label{eq1}
X_P^S =\sqcup_{w\in W^P}B^SwP/P.
\end{equation}
Since $vP/P\in \mathcal{Z}$, we get
$$
(G^S)^o\cdot vP/P\subset \mathcal{Z}.
$$

Moreover, the isotropy subgroup of $v$ in $(G^S)^o$ is clearly $(G^S)^o\cap (vPv^{-1})\supset B^S$, thus we get a $(G^S)^o$-equivariant embedding
\begin{equation}\label{eq2}
\phi_v:(G^S)^o/\left((G^S)^o\cap(vPv^{-1})\right) \hookrightarrow \mathcal{Z}.
\end{equation}

We now prove that any $(G^S)^o$-orbit in $X_P^S$ is closed: By  identity (\ref{eq1}), any $(G^S)^o$-orbit $A$ in $X_P^S$ is a certain union $\sqcup_{w\in\theta(A)}B^S wP/P$, for some subset $\theta(A)\subset W^P$. Take $v_o\in \theta(A)$ of smallest length. 
Then, we prove that 
\begin{equation}\label{eq3}
B^S\cdot v_oP/P =v_o P/P.
\end{equation}

For if (\ref{eq3}) were false, take $\mathcal{U}_{\alpha}\subset B^S \subset (G^S)^o$ 
such that 
$$
\mathcal{U}_\alpha\cdot v_o P/P \supsetneqq v_o P/P,
$$
where $\mathcal{U}_\alpha$ is the one-parameter additive group corresponding to a positive root $\alpha$. In particular 
\begin{equation} \label{eq4}
v_o^{-1}\mathcal{U}_\alpha v_o\nsubset P \mbox{ and hence } v_o^{-1}\mathcal{U}_{-\alpha} v_o\subset \mathcal{U}^P,
\end{equation}
 where $\mathcal{U}^P$ is the unipotent radical of $P$, which gives 
$$
\mathcal{U}_{-\alpha}v_oP/P=v_oP/P.
$$

By (\ref{eq4}), $v_o^{-1}\alpha$ is a negative root and hence 
\begin{equation}\label{eq5}
s_\alpha v_o< v_o\quad \mbox{ by [K, Lemma 1.3.13]},
\end{equation}
where $s_\alpha \in W$ is the reflection through $\alpha$. Since $\mathcal{U}_\alpha\subset B^S\subset (G^{S})^o$, then so is $\mathcal{U}_{-\alpha}\subset(G^S)^o$
and hence the subgroup $\langle \mathcal{U}_\alpha, \mathcal{U}_{-\alpha}\rangle\subset G$ generated by $\mathcal{U}_\alpha$ and $\mathcal{U}_{-\alpha}$ is contained in $(G^S)^o$. Thus, $s_\alpha v_oP/P \in (G^S)^o\cdot v_oP/P$.
This is a contradiction to the minimal choice of $v_o$ in the $(G^S)^o$-orbit. Thus (\ref{eq3}) is true, proving that any $(G^S)^o$-orbit in $X_P^S$ is closed. Further, by (\ref{eq1}), there are only finitely many $(G^S)^o$-orbits in $X_P^S$. 

We return to the embedding $\phi_v$ as in (\ref{eq2}). We now show that $\phi_v$ is surjective: Since $\mathcal{Z}$ is stable under the action of $(G^S)^o$, it is a (finite) union of $(G^S)^o$-orbits. But, each $(G^S)^o$-orbit is closed in $X_P^S$ and hence $\mathcal{Z}$ is a disjoint union of finitely many closed $(G^S)^o$-orbits. But, $\mathcal{Z}$ being connected, it is a single $(G^S)^o$-orbit, proving that $\phi_v$ is a bijective isomorphism. 

Consider the map
$$
\psi:Y_S\rightarrow \left\{\mbox{set of the connected components of } X_P^S\right\},
$$
which takes 
$$
v\in Y_S \mapsto (G^S) \cdot vP/P.
$$
By the above proof $(G^S)^o\cdot vP/P$ is indeed a connected component of $X_P^S$ and hence $\psi$ is well defined. Moreover, $\psi$ is injective since any compact homogeneous variety of a reductive connected algebraic group over $\mathbbm{C}$ has a unique fixed point under any Borel subgroup, which follows from the Bruhat decomposition. 

Further, as proved above, $\psi$ is surjective. This proves the proposition.  
\end {proof}

\begin {corollary}\label{cry1.2}
With the notation as in Proposition \ref{pps1.1}, assume that $P=B$. Then, each connected component of $X^S_B$ is isomorphic, as a $(G^S)^o$-variety, with $(G^S)^o/B^S$. In particular, each connected component of $X_B^S$ is of the same dimension. 
\end {corollary}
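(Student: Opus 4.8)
The plan is to read the corollary directly off Proposition \ref{pps1.1} by computing, for each $v\in Y_S$, the isotropy subgroup $(G^S)^o\cap vBv^{-1}$ that occurs there. With $P=B$, Proposition \ref{pps1.1} already supplies a $(G^S)^o$-equivariant isomorphism $\phi_v:(G^S)^o/\left((G^S)^o\cap vBv^{-1}\right)\overset{\sim}{\longrightarrow} X_B^S(v)$ and records the inclusion $B^S\subseteq (G^S)^o\cap vBv^{-1}$, which is where $v\in Y_S$ enters. So the whole statement reduces to the claim that this inclusion is an equality: once we know $(G^S)^o\cap vBv^{-1}=B^S$, we get $X_B^S(v)\cong (G^S)^o/B^S$ as $(G^S)^o$-varieties for every $v\in Y_S$, and the assertion on dimensions is then immediate, all components being isomorphic to the single variety $(G^S)^o/B^S$ (so $\dim_{\C}X_B^S(v)=\dim(G^S)^o-\dim B^S$, namely the number of positive roots of $(G^S)^o$, independently of $v$).

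To prove the equality, set $H_v:=(G^S)^o\cap vBv^{-1}$ and use two properties of $H_v$. First, $H_v$ contains $B^S$, which is a Borel subgroup of the connected reductive group $(G^S)^o$: indeed $(G^S)^o$ has maximal torus $T$ and root system $\{\alpha\in R(G):\alpha_{|S}=1\}$, and $B^S=T\cdot\prod_{\alpha>0,\ \alpha_{|S}=1}\mathcal{U}_\alpha$ is the corresponding Borel (this is already how $B^S$ is treated in Proposition \ref{pps1.1}). A closed subgroup of a connected algebraic group that contains a Borel subgroup is parabolic, hence connected — parabolic subgroups are self-normalizing, so such a subgroup is contained in the normalizer of its identity component and therefore equals it. Second, $H_v$ is solvable, being a closed subgroup of the solvable group $vBv^{-1}$. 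A connected solvable subgroup containing a Borel subgroup must coincide with it, by maximality of Borel subgroups among connected solvable subgroups. Hence $H_v=B^S$, as desired.

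The one point to watch is that $S$ need not be connected, so $G^S$ — and a priori $H_v$ — may be disconnected; this is exactly why one cannot simply compare root groups of $(G^S)^o$ directly. The step carrying the weight is therefore the standard fact that a closed subgroup of a connected group containing a Borel subgroup is automatically connected. If one prefers to avoid invoking that fact, one can argue instead that $H_v$, containing $B^S$, is a standard parabolic subgroup of $(G^S)^o$ and hence determined by a subset of the simple roots of $(G^S)^o$; since $v\in Y_S$ forces $\alpha>0\iff v^{-1}\alpha>0$ for every root $\alpha$ of $(G^S)^o$ — proved by the reflection argument behind \eqref{eq3}--\eqref{eq5} — no negative simple root group of $(G^S)^o$ can lie in $vBv^{-1}$, so the subset is empty and again $H_v=B^S$.
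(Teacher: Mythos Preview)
Your proof is correct and follows essentially the same route as the paper: apply Proposition \ref{pps1.1} with $P=B$, then show $(G^S)^o\cap vBv^{-1}=B^S$ using that this intersection is solvable (as a subgroup of $vBv^{-1}$) and contains the Borel $B^S$. The paper states this in one line by invoking maximality of $B^S$ among solvable subgroups of $(G^S)^o$; you are simply more careful in justifying why the a priori possibly disconnected group $H_v$ must equal $B^S$, via the self-normalizing property of parabolics, which is exactly what underlies the paper's terse claim.
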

\begin {proof}
By the above proposition applied to the case of $P=B$, we get that any connected component $\mathcal{Z}$ of $X_B^S$ is isomorphic with $(G^S)^o/\left((G^S)^o\cap(vBv^{-1})\right)$, for some $v\in Y_S \subset W$. 
Moreover, $B^S\subset (G^S)^o\cap(vBv^{-1})$. 
But, $vBv^{-1}$ being a solvable group and $B^S$ being a maximal solvable subgroup of $(G^S)^o$, we get that $B^S=(G^S)^o\cap(vBv^{-1})$. This proves the Corollary. 
\end {proof}
\begin {example}\label{exm1.3}
{\rm For a general parabolic $P$, $X_P^S$ does not necessarily have components of the same dimension. For example, take $G=\GL_n(\mathbbm{C})$ and $P$ a maximal parabolic subgroup such that $X_P=\mathbbm{P}^{n-1}$. Take $S\subset T$ be the subgroup of order $2$ generated by the diagonal element $\left((+1)^m,(-1)^{n-m}\right)$, which is $+1$ along the first $m$ entries of the diagonal and $-1$ in the following  $(n-m)$ entries. 
 In this case,}
$$
X^S_P=\mathbbm{P}^{m-1}\sqcup \mathbbm{P}^{n-m-1}.
$$
{\rm This provides a counter example if $n\neq 2m$.}
\end {example}

\begin {remark}\label{rmk1.4}
    {\rm (1) Following the notation and assumptions of Proposition \ref{pps1.1},} 
$$
Y_S=\left\{ v\in W^P:~ \mathcal{U}_v^S=e \right\},
$$
{\rm where $\mathcal{U}_v\subset G$ is the unipotent group with Lie algebra} 
$$
\bigoplus_{\alpha\in R^+ \cap v R_P^-}  \mathfrak{g}_\alpha = \bigoplus_{\alpha\in R^+ \cap v R^-} \mathfrak{g}_\alpha, 
$$
{\rm where $R^+$ is the set of positive roots of $G$ and $R^-_P$ is the set of  roots of the opposite unipotent radical of $P$. 
This follows since}
$$
B^S v~P/P= \mathcal{U}_v^S\cdot vP/P.
$$
\vskip1ex

    {\rm (2) If $S$ is generated by a finite order $t\in T$ which is regular, i.e., $e^\alpha(t)\neq 1$ for any root $\alpha$ of $G$, then $B^S=T$ (and hence $(G^S)^o=T$), $Y_S=W^P$ and the connected components of $X_P^S$ are all points (use Proposition \ref{pps1.1}).} 
\end {remark}

\begin {lemma}\label{lem1.5}
Following Proposition \ref{pps1.1}, we get the following:
\begin{equation}\label{eq6}
\sum_{v\in Y_S} \# \left( W((G^S)^o)/W\left((G^S)^o\cap (vPv^{-1})\right)\right)=\# W^P,
\end{equation}
 where $W(H)$ is the Weyl group of $H$. 

In particular,
\begin{equation}\label{eq7}
\# Y_S\geq \frac{\# W^P}{\#W((G^S)^o)}.
\end{equation}
For $P=B$, in fact we have the equality 
\begin{equation}\label{eq8}
\#Y_S =\frac{\# W}{\#W((G^S)^o)}.
\end{equation}
\end {lemma}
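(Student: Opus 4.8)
The plan is to count $T$-fixed points on $X_P^S$ in two different ways. Recall first that $S\subset T$ with $T$ abelian, so $T$ centralizes $S$ and hence $T\subset (G^S)^o$; since $T$ is a maximal torus of $G$ it is also a maximal torus of the connected reductive group $(G^S)^o$, and moreover $T\subset B^S$ (as $sts^{-1}=t$ for $s\in S$, $t\in T$). For the first count, recall from the Bruhat decomposition that the $T$-fixed points of $X_P=G/P$ are exactly the points $vP/P$ for $v\in W^P$, so $\#(X_P)^T=\#W^P$. Since $S\subset T$, a $T$-fixed point is in particular $S$-fixed; conversely a point of $X_P^S$ fixed by $T$ is just a $T$-fixed point of $X_P$. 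Hence $(X_P^S)^T=(X_P)^T$ has exactly $\#W^P$ elements.

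For the second count I would invoke Proposition \ref{pps1.1}: we have the disjoint union into connected (equivalently, irreducible) components $X_P^S=\bigsqcup_{v\in Y_S}X_P^S(v)$, with $X_P^S(v)\cong (G^S)^o/Q_v$ where $Q_v:=(G^S)^o\cap(vPv^{-1})$. Each $X_P^S(v)$ is closed in $X_P$ and hence projective, so it is a projective homogeneous space of the connected reductive group $(G^S)^o$, whence $Q_v$ is a parabolic subgroup of $(G^S)^o$; since $v\in Y_S$ it contains $B^S$, and therefore the maximal torus $T$. Consequently the $T$-fixed points of $(G^S)^o/Q_v$ are in bijection with $W((G^S)^o)/W(Q_v)$, so $\#\big(X_P^S(v)\big)^T=\#\big(W((G^S)^o)/W((G^S)^o\cap(vPv^{-1}))\big)$. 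Summing over $v\in Y_S$ and comparing with the first count yields \eqref{eq6}.

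The two stated consequences then fall out. Each summand of \eqref{eq6} is at most $\#W((G^S)^o)$, so $\#W^P\le \#Y_S\cdot\#W((G^S)^o)$, which is \eqref{eq7}. For \eqref{eq8} I specialize to $P=B$, where $W^B=W$; by Corollary \ref{cry1.2}, $(G^S)^o\cap(vBv^{-1})=B^S$ is a Borel subgroup of $(G^S)^o$, whose Weyl group (that of its maximal torus $T$) is trivial, so every summand of \eqref{eq6} equals $\#W((G^S)^o)$ and \eqref{eq6} becomes $\#Y_S\cdot\#W((G^S)^o)=\#W$.

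The one step that needs care is the identification of $Q_v$ as a parabolic subgroup of $(G^S)^o$ containing the maximal torus $T$ — this is what legitimizes the elementary count of $T$-fixed points on the partial flag variety $(G^S)^o/Q_v$ as $\#\big(W((G^S)^o)/W(Q_v)\big)$. That identification rests on the projectivity of the orbit $X_P^S(v)$ established inside the proof of Proposition \ref{pps1.1} (a $(G^S)^o$-orbit in $X_P^S$ is closed) together with $T\subset B^S\subset Q_v$; everything else is bookkeeping.
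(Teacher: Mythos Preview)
Your proof is correct and follows essentially the same approach as the paper's own proof: both count the $T$-fixed points of $X_P^S$ in two ways, first as $(X_P)^T=\{wP/P:w\in W^P\}$ (the paper phrases this as ``$wP/P\in X_P^S$ for any $w\in W^P$'') and then component-by-component via the isomorphisms $\phi_v$ of Proposition~\ref{pps1.1}. Your version is simply more explicit about why $Q_v=(G^S)^o\cap(vPv^{-1})$ is a parabolic subgroup of $(G^S)^o$ containing $T$, which justifies the count $\#\big((G^S)^o/Q_v\big)^T=\#\big(W((G^S)^o)/W(Q_v)\big)$; the paper leaves this implicit.
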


\begin {proof}
The identity (\ref{eq6}) follows from Proposition \ref{pps1.1} since $wP/P\in X_P^S$ for any $w\in W^P$ and $\phi_v: (G^S)^o/\left((G^S)^o\cap vPv^{-1}\right)\overset{\sim}{\rightarrow}X_P^S(v)$ is a $(G^S)^o$-equivariant isomorphism for any $v\in Y_S$ and, moreover,  $\sqcup_{v\in Y_S}X_P^S(v)=X_P^S$. 

Inequality (\ref{eq7}), of course, follows immediately from identity (\ref{eq6}). 

To prove  identity (\ref{eq8}), observe that for $P=B$, $(G^S)^o\cap v Pv^{-1}=B^S$ for any $v\in Y_S$ (cf. proof of Corollary \ref{cry1.2}). 

\end {proof}

\section{Character of any representation at finite order elements- An asymptotic formula via Lefschetz Theorem}

We apply the Lefschetz Trace Formula due to Atiyah-Singer (cf. \cite[Theorem 4.6]{AS}) for the case of the complex manifold $X=X_P=G/P$, the automorphism of $X_P$ is given by the left multiplication of a finite order element $t\in T$ and the vector bundle is a homogeneous line bundle $\mathcal{L}(\lambda):=G\times^P\mathbbm{C}_{-\lambda}\rightarrow G/P$ associated to a character $\lambda$ of $P$. 

Fix a connected component $X_P^t(v)=X_P^S(v)$ (for $v\in Y_S$) as in Proposition \ref{pps1.1}, where $S\subset T$ is the finite subgroup generated by $t$. Then, the $S$-equivariant line bundle.
\begin{equation}\label{eq9}
\mathcal{L}(\lambda)_{|_{X_{P}^t(v)}} \approx e^{-v\lambda}_{|_{S}}\otimes \hat{\mathcal{L}}(\lambda)_{|_{X_{P}^t(v)}},
\end{equation}
where $\hat{\mathcal{L}}(\lambda)_{|_{X_{P}^t(v)}}$ is the same line bundle as $\mathcal{L}(\lambda)_{|_{X_{P}^t(v)}}$ but with the trivial action of $S$. 

We next determine the normal bundle $N^t_v$ of $X_P^t(v)$ in $X_P$:
\vskip1ex

First of all the tangent space $T_{v\,P/P}(X_P)$ is given by the derivative of the curves (at $z=0$) $\gamma_\alpha:\mathbbm{C}\rightarrow X_P,$ $z \mapsto v\mbox{Exp}(z y_\alpha)P/P$, where $\alpha$ runs over the (negative) roots in $R_P^-$ (see Remark \ref{rmk1.4}(1)) and $y_\alpha$ is a fixed root vector of the root space $\mathfrak{g}_{-\alpha}$. For any $s\in T$, the action of $T$ on the above curve is given by 
\begin{eqnarray*}
s v\mbox{Exp}(z y_\alpha) P/P&=&v\left(v^{-1}s v\right) \mbox{ Exp}(z y_\alpha)(v^{-1}s^{-1} v)P/P\\
&=& v \mbox{ Exp}(\Ad (v^{-1}s v)\cdot z y_\alpha) P/P.
\end{eqnarray*}

From this we see that the derivative $\dot{\gamma}_\alpha(0)$ is transformed by the $T$-action via $v\cdot y_\alpha$. Thus, the tangent space (as  a $T$-module) is given by 
\begin{equation}\label{eq10}
T_v(X_P) = \bigoplus_{\alpha\in v R^-_P} \mathfrak{g}_{\alpha}.
\end{equation}

Similarly, considering the curve
$\mathbbm{C}\ni z \mapsto \mbox{Exp}(z  y_\alpha)vP/P$ in $X_P^t(v)$, we get that 
\begin{equation}\label{eq11}
T_v\left(X_P^t(v)\right)=\bigoplus_{\alpha\in(vR^-_P)\cap R((G^t)^o)}\, \mathfrak{g}_\alpha,
\end{equation}
where $R((G^t)^o)\subset R(G)$ denotes the set of all the roots of $(G^t)^o$ (here $G^t:=G^S$). 
Thus, the normal bundle $N^t_v$ over $X_P^t(v)$ is given by 
\begin{equation}\label{eq12}
N_v^t \approx \bigoplus_{\alpha\in (vR_P^+)\backslash R((G^t)^o)} \quad \overline{\mathcal{L}} (\alpha)_{|_{X_P^t(v)}},
\end{equation}
where $\overline{\mathcal{L}} (\alpha)_{|_{X_P^t(v)}}$ is the $(G^t)^o$-equivariant line bundle over $X_P^t(v)$ such that the fiber over $vP/P$ has $T$-weight $-\alpha$. Observe that 
\begin{equation}\label{eq13}
R^-((G^t)^o)\subset v\cdot R^-, \mbox { thus}\quad R^+((G^t)^o)\subset v\cdot R^+.
\end{equation}
If (\ref{eq13}) were false, take $\alpha\in R^-((G^t)^o)$ such that $v^{-1}\cdot \alpha \in R^+$. This gives $\alpha \in (v\cdot R^+)\cap R^-$. But, since $\mathcal{U}^t_v=(e)$ (see Remark \ref{rmk1.4}(1)), $t$ acts nontrivially on $\mathfrak{g}_\alpha$ and hence $\alpha\notin R((G^t)^o)$. This contradicts the choice of $\alpha$ and hence (\ref{eq13}) is proved. 

Let $V(\lambda)$ be the irreducible representation of $G$ with highest weight $\lambda\in \mathfrak{t}^*$ and let $t\in T$ be an element of finite order. Let $\ch(t, V(\lambda))$ be the trace of the action of $t$ on $V(\lambda)$.

We now apply \cite[Theorem 4.6]{AS} to get the following theorem. Let $P=P_\lambda$ be the unique standard parabolic subgroup of $G$ such that $\mathcal{L}(\lambda)$ is an ample line bundle over $X_P$. 

\begin{theorem}\label{thm2.1}
With the notation as above, for any integer $n\geqslant 0$, 
$$\ch\left(t^{-1},V(n\lambda)\right)=\sum_{v\in Y_S} D^t_v(n,\lambda),
$$
where
$$ D^t_v(n,\lambda):= \sum_{k\geqslant 0} \int_{X_P^t(v)}\,\,\frac{c_1\left(\mathcal{L}(\lambda)_{|_{X_P^t(v)}}\right)^k\frac{n^k}{k!}e^{-nv\lambda}(t)\cdot \td(X_P^t(v))}{\prod_{\alpha\in(vR^+_P)\backslash R((G^t)^o)}\, [1-e^\alpha(t)e^{- c_1(\overline{\mathcal{L}}(\alpha)_{|_{X_P^t(v)}})}]},
$$
as earlier $R^+_P$ is the set of roots of the unipotent radical of $P$ and $\td$ is the Todd genus of the tangent bundle, 
\end {theorem}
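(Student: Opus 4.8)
The plan is to apply the Atiyah--Singer holomorphic Lefschetz fixed point formula \cite[Theorem 4.6]{AS} to the complex manifold $X_P = G/P$, the holomorphic automorphism given by left translation by $t \in T$ (which has finite order and hence generates a finite, in particular compact, group $S$ acting holomorphically), and the $S$-equivariant line bundle $\mathcal{L}(n\lambda) \cong \mathcal{L}(\lambda)^{\otimes n}$. By the Borel--Weil theorem, since $\mathcal{L}(\lambda)$ is ample on $X_P = X_{P_\lambda}$, the higher cohomology $H^i(X_P, \mathcal{L}(n\lambda))$ vanishes for $i > 0$ and $H^0(X_P, \mathcal{L}(n\lambda)) \cong V(n\lambda)^*$ as $G$-modules; hence the holomorphic Lefschetz number $\sum_i (-1)^i \operatorname{tr}\bigl(t \mid H^i(X_P, \mathcal{L}(n\lambda))\bigr)$ equals $\operatorname{tr}\bigl(t \mid V(n\lambda)^*\bigr) = \ch(t^{-1}, V(n\lambda))$. (One must be slightly careful about whether the action on $\mathcal{L}(\lambda)$ induced from $\mathbbm{C}_{-\lambda}$ produces $V(\lambda)$ or its dual, and about the sign conventions in $e^{-v\lambda}$; this is exactly why the left side reads $t^{-1}$.)

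\textbf{Identifying the fixed locus and its invariants.} The fixed-point set of $t$ on $X_P$ is $X_P^S$, which by Proposition \ref{pps1.1} is the smooth disjoint union $\bigsqcup_{v \in Y_S} X_P^t(v)$ of its connected (= irreducible) components, each a homogeneous $(G^S)^o$-variety; this is precisely the combinatorial data the Atiyah--Singer formula requires. For each $v$, the formula expresses the contribution as an integral over $X_P^t(v)$ of the equivariant Chern character of the restricted bundle, times the Todd class of $X_P^t(v)$, divided by the $S$-equivariant Euler class of the normal bundle $N_v^t$ in the form $\prod_j (1 - t\cdot e^{-x_j})$ where the $x_j$ are Chern roots of $N_v^t$ and $t$ acts on each summand by a character. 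Here I would plug in: (i) the splitting \eqref{eq9} of $\mathcal{L}(\lambda)|_{X_P^t(v)}$ as the scalar $e^{-v\lambda}(t)$ times the non-equivariant line bundle $\hat{\mathcal{L}}(\lambda)|_{X_P^t(v)}$, so that its equivariant Chern character is $e^{-v\lambda}(t) \cdot e^{c_1(\mathcal{L}(\lambda)|_{X_P^t(v)})}$, and raising to the $n$-th tensor power replaces this by $e^{-nv\lambda}(t) \cdot e^{n\,c_1(\mathcal{L}(\lambda)|_{X_P^t(v)})} = \sum_{k\geq 0} \frac{n^k}{k!} c_1(\cdots)^k \, e^{-nv\lambda}(t)$; and (ii) the decomposition \eqref{eq12} of the normal bundle $N_v^t \cong \bigoplus_{\alpha \in (vR_P^+)\setminus R((G^t)^o)} \overline{\mathcal{L}}(\alpha)|_{X_P^t(v)}$, on whose $\alpha$-summand $t$ acts by $e^\alpha(t)$ and whose first Chern class is $c_1(\overline{\mathcal{L}}(\alpha)|_{X_P^t(v)})$, so that the $S$-equivariant Euler class is $\prod_\alpha \bigl[1 - e^\alpha(t)\, e^{-c_1(\overline{\mathcal{L}}(\alpha)|_{X_P^t(v)})}\bigr]$. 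Note that \eqref{eq13} guarantees that the weights $e^\alpha(t)$ appearing here are genuinely $\neq 1$ (since $\mathcal{U}_v^t = e$), so each factor is invertible in the appropriate completed cohomology ring and the integrand is well-defined. Assembling these pieces and summing over $v \in Y_S$ yields exactly the claimed formula for $D_v^t(n,\lambda)$.

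\textbf{Main obstacle.} The genuinely delicate point is the bookkeeping of equivariant structures and signs: verifying that the $T$-weight on the fiber of $\overline{\mathcal{L}}(\alpha)$ over $vP/P$ is $-\alpha$ (done in the discussion around \eqref{eq10}--\eqref{eq12}) yet that $t$ acts on the corresponding normal direction by $e^\alpha(t)$ and not $e^{-\alpha}(t)$, and correspondingly getting the sign in $e^{-v\lambda}(t)$ versus $e^{v\lambda}(t)$ consistent with the $t^{-1}$ on the left-hand side; a single misplaced inverse propagates through the whole formula. I would pin this down by carefully matching the convention in which $\mathcal{L}_P(\lambda) = G\times^P \mathbbm{C}_{-\lambda}$ is ample precisely when $\lambda$ is dominant, and by checking the rank-one case $G = \SL_2$ against the Weyl character formula. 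The remaining steps — Borel--Weil vanishing, the identification of the fixed components via Proposition \ref{pps1.1}, and the normal bundle computation \eqref{eq10}--\eqref{eq13} — are already in place, so modulo these sign conventions the theorem is a direct substitution into \cite[Theorem 4.6]{AS}.
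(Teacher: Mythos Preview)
Your proposal is correct and follows essentially the same route as the paper: apply the Atiyah--Singer holomorphic Lefschetz formula \cite[Theorem~4.6]{AS} to $\mathcal{L}(n\lambda)$ on $X_P$, identify the left-hand side via Borel--Weil(--Bott) as $\ch(t^{-1},V(n\lambda))$, and evaluate the right-hand side using Proposition~\ref{pps1.1} for the fixed components together with \eqref{eq9} and \eqref{eq12} for the equivariant Chern character and normal bundle. Your write-up is in fact more detailed than the paper's (which simply cites these ingredients and leaves the substitution to the reader), and your care with the sign/weight conventions is appropriate rather than indicative of a gap.
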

\begin {proof}
Use \cite[Theorem 4.6]{AS} together with the identities (\ref{eq6}) and (\ref{eq12}) for the line bundle $\mathcal{L}(n\lambda)$ over $X_P$. 
By the Borel-Weil-Bott Theorem, 
\begin{eqnarray*}
&&H^i\left(X_P,\mathcal{L}(n\lambda)\right)=0,\quad \mbox{for all } i>0\\
\mbox{and} && H^0\left(X_P,\mathcal{L}(n\lambda)\right)= V(n\lambda)^*. 
\end{eqnarray*}
Thus, 
\begin{eqnarray*}
\sum_{i\geq0}(-1)^i \mbox{Trace}\left(t,~H^i\left(X_P,\mathcal{L}(n\lambda)\right)\right)&=& \mbox{ch}\left(t, V(n\lambda)^*\right)\\
&=& \mbox{ch}\left(t^{-1}, V(n\lambda)\right).
\end{eqnarray*}
\end {proof}
\begin{remark} \label{rmk2.2} {\rm The above theorem remains true (by the same proof) for any standard parabolic subgroup $P$ replacing $P_\lambda$ as long as $\lambda$ extends to a character of $P$; in particular, for $P=B$. Applying the above theorem for $P=B$ and $n=1$,  we get that 
$\ch\left(t^{-1},V(\lambda)\right)$ is a piecewise polynomial function in $\lambda$. In fact, it is a polynomial function restricted to the dominant elements of any coset $X(T)/d\cdot X(T)$ ($d$ being the order of $t$), where $X(T)$ is the character group of $T$.}
\end{remark}
\vskip1ex

Let $d_v$ be the complex dimension of $X_P^t(v)$ and let $r$ be the order of $t\in T$. Consider the function 
$$
\chi^t_\lambda:~\mathbbm{Z}_{\geqslant 0}\rightarrow \mathbbm{C},\quad n \mapsto \mbox{ch}\left(t^{-1},V(n\lambda)\right).
$$

As a corollary of Theorem (\ref{thm2.1}), we get the following. 
\begin {corollary}\label{cry2.2}
For any fixed $0\leq p<r$, $\chi_{\lambda_{|\left\{n\equiv p(mod\,r)\right\}}}^t$ is a polynomial function in $n$ of degree
$\leq \max_{v\in Y_S}\left\{d_v\right\}$.

Moreover, the coefficient of $n^{d_v}$ in $D_v^t(n,\lambda)$ restricted to $\{ n\equiv p (mod\, r) \}$ is equal to 
\begin{equation}\label{eq14}
\frac{1}{d_v!}\int_{X_P^t(v)} c_1 \left(\mathcal{L}(\lambda)_{|_{X_P^t(v)}}\right)^{d_v} \frac{e^{-pv\lambda}(t)}{\prod_{\alpha\in \left(v R^+_P\right)\backslash R\left(\left(G^t\right)^o\right)}\left(1-e^{\alpha}(t)\right)}.
\end{equation}
\end{corollary}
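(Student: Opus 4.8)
The plan is to extract the asymptotics directly from the Atiyah-Singer fixed point formula of Theorem \ref{thm2.1}. First I would analyze a single summand $D_v^t(n,\lambda)$. Expanding the integrand, the only place $n$ enters is through the factor $c_1\left(\mathcal{L}(\lambda)_{|_{X_P^t(v)}}\right)^k \frac{n^k}{k!}e^{-nv\lambda}(t)$. Since $X_P^t(v)$ has complex dimension $d_v$, the cohomology class $c_1\left(\mathcal{L}(\lambda)_{|_{X_P^t(v)}}\right)^k$ vanishes for $k > d_v$, so the sum over $k$ is finite and the integral picks out only the degree-$2d_v$ part of the integrand. For each fixed $k \le d_v$ the coefficient of $n^k$ is $\frac{1}{k!}e^{-nv\lambda}(t)$ times a fixed complex number (the relevant integral of characteristic classes); but $e^{-nv\lambda}(t) = \left(e^{-v\lambda}(t)\right)^n$ is a root of unity depending only on $n \bmod r$. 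Hence, once we restrict to a residue class $n \equiv p \pmod r$, the factor $e^{-nv\lambda}(t)$ becomes the constant $e^{-pv\lambda}(t)$, and $D_v^t(n,\lambda)$ becomes a genuine polynomial in $n$ of degree at most $d_v$. Summing over $v \in Y_S$ (equivalently $Y_t$) gives that $\left(\chi^t_\lambda\right)_{|\{n \equiv p(\bmod\, r)\}}$ is a polynomial of degree $\le \max_{v} d_v$, which is the first assertion.

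Next I would identify the top coefficient. The coefficient of $n^{d_v}$ in $D_v^t(n,\lambda)$, after restricting to $n \equiv p \pmod r$, comes precisely from the $k = d_v$ term: the factor $\frac{n^{d_v}}{d_v!}e^{-pv\lambda}(t)$ multiplied by the integral over $X_P^t(v)$ of $c_1\left(\mathcal{L}(\lambda)_{|_{X_P^t(v)}}\right)^{d_v}$ against the degree-zero parts of $\td(X_P^t(v))$ and of $\prod_{\alpha}[1-e^\alpha(t)e^{-c_1(\overline{\mathcal{L}}(\alpha)_{|_{X_P^t(v)}})}]^{-1}$. The degree-zero part of the Todd class is $1$, and the degree-zero part of each factor $[1-e^\alpha(t)e^{-c_1}]^{-1}$ is simply $(1-e^\alpha(t))^{-1}$ — here one must note that $e^\alpha(t) \ne 1$ for $\alpha \in (vR_P^+)\setminus R((G^t)^o)$, which is exactly the condition guaranteeing these factors are invertible, as used already in deriving \eqref{eq12}. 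This yields formula \eqref{eq14} for the coefficient of $n^{d_v}$.

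Finally, for the "in particular" clause, suppose there is a unique $v_o \in Y_S$ with $d_{v_o} = \max_v d_v =: d$. Every other $v$ contributes a polynomial of strictly smaller degree, so the coefficient of $n^d$ in $\left(\chi^t_\lambda\right)_{|n\equiv p(\bmod\, r)}$ equals the coefficient of $n^d$ in $D_{v_o}^t(n,\lambda)$ restricted to that residue class, namely
$$
\frac{e^{-pv_o\lambda}(t)}{d!}\int_{X_P^t(v_o)} \frac{c_1\left(\mathcal{L}(\lambda)_{|_{X_P^t(v_o)}}\right)^{d}}{\prod_{\alpha\in (vR^+_P)\backslash R((G^t)^o)}\left(1-e^{\alpha}(t)\right)}.
$$
To conclude this is nonzero — so that the degree is exactly $d$ — I would invoke Wirtinger's theorem (as flagged in the introduction): $\mathcal{L}(\lambda)_{|_{X_P^t(v_o)}}$ is ample on the smooth projective variety $X_P^t(v_o)$ (being the restriction of the ample bundle $\mathcal{L}(\lambda)$ on $X_P$), hence $\int_{X_P^t(v_o)} c_1\left(\mathcal{L}(\lambda)_{|_{X_P^t(v_o)}}\right)^d > 0$, and the denominator $\prod_\alpha(1-e^\alpha(t))$ is a fixed nonzero complex number, while $e^{-pv_o\lambda}(t)$ is a root of unity. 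Therefore the top coefficient is nonzero and the polynomial has degree exactly $d_{v_o}$.

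The main obstacle I anticipate is bookkeeping rather than conceptual: one must be careful that the denominator $\prod_{\alpha \in (vR_P^+)\setminus R((G^t)^o)}[1-e^\alpha(t)e^{-c_1}]$ is genuinely invertible in the cohomology ring — that is, that its degree-zero part $\prod_\alpha(1-e^\alpha(t))$ does not vanish — which rests on the observation (already established before \eqref{eq12}) that $t$ acts nontrivially on $\mathfrak{g}_\alpha$ for exactly those $\alpha$, since $\mathcal{U}_v^t = (e)$. A secondary point to handle cleanly is that the integral of characteristic classes is taken over $X_P^t(v)$ and only the components of complex codimension-matching degree survive, so extracting "the coefficient of $n^{d_v}$" requires pinning down which term of the $k$-sum and which graded piece of $\td$ and of the denominator contribute — but this is exactly the degree count performed above.
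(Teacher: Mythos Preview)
Your proof is correct and follows essentially the same approach as the paper: both extract the polynomial behavior and the top coefficient directly from Theorem \ref{thm2.1}, using that the $k$-sum truncates at $k=d_v$ for dimension reasons, that $e^{-nv\lambda}(t)$ depends only on $n\bmod r$, and that the degree-zero part of the Todd class is $1$. Your final paragraph on the ``in particular'' clause actually goes beyond Corollary \ref{cry2.2} as stated and proves what the paper records separately as Corollary \ref{cry2.5}, via the same Wirtinger argument (Remark \ref{rmk2.3}).
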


\begin {proof}
The polynomial behavior  of $\chi_{\lambda|\left\{n\equiv p (mod r)\right\}}$ follows immediately from Theorem \ref{thm2.1}. To prove (\ref{eq14}), again use Theorem \ref{thm2.1} together with the fact that the constant term of the Todd genus of any manifold is 1 (cf.  \cite[Example 3.2.4]{F}). 
\end{proof}

\begin {remark}\label{rmk2.3}
{\rm Since $\mathcal{L}(\lambda)$ is an ample line bundle over $X_P=G/P$, by Wirtinger theorem (cf. \cite[Chapter 0, $\S$2]{GH}), for any $v\in Y_S$,} 
$$
\int_{X_P^t(v)}c_1\left(\mathcal{L}(\lambda)|_{X_P^t(v)}\right)^{d_v}>0.
$$
\end {remark}

As a corollary of Theorem \ref{thm2.1}, Corollary \ref{cry2.2} and Remark \ref{rmk2.3}, we immediately get the following result.
\begin{corollary} \label{cry2.5} Following the notation and assumptions as in Theorem \ref{thm2.1}, assume further that there is a unique $v_o\in Y_S$ such that $d_{v_o}=  \max_{v\in Y_S}\left\{d_v\right\}$. Then, for any fixed $0\leq p< r$, 
	$$\ch\left(t^{-1}, V(n\lambda)_{|_{n\equiv p (mod \ r)}}\right)$$
	is a polynomial function of degree exactly equal to $d_{v_o}$.
\end{corollary}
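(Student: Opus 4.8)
\textbf{Proof proposal for Corollary \ref{cry2.5}.}

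The plan is to assemble the statement directly from the three ingredients already in hand: Theorem \ref{thm2.1}, which writes $\chi^t_\lambda(n) = \sum_{v\in Y_S} D^t_v(n,\lambda)$; Corollary \ref{cry2.2}, which says that on each residue class $\{n\equiv p\,(\mathrm{mod}\,r)\}$ the function $D^t_v(n,\lambda)$ is a polynomial in $n$ of degree $\leq d_v$, with the coefficient of $n^{d_v}$ given explicitly by \eqref{eq14}; and Remark \ref{rmk2.3} (Wirtinger), which guarantees the positivity $\int_{X_P^t(v)} c_1(\mathcal{L}(\lambda)_{|X_P^t(v)})^{d_v} > 0$. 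First I would fix $p$ with $0\leq p<r$ and restrict everything to the arithmetic progression $n\equiv p\,(\mathrm{mod}\,r)$. By Corollary \ref{cry2.2}, each summand $D^t_v$ restricted to this progression is a polynomial of degree at most $d_v \leq d_{v_o}$; hence the total $\chi^t_\lambda$ restricted to the progression is a polynomial of degree at most $d_{v_o}$. It remains to show the degree is \emph{exactly} $d_{v_o}$, i.e. that the coefficient of $n^{d_{v_o}}$ in $\chi^t_\lambda|_{\{n\equiv p\}}$ is nonzero.

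For the leading coefficient, I would observe that among the summands only $v_o$ contributes in degree $d_{v_o}$: by the uniqueness hypothesis, every other $v\in Y_S$ has $d_v < d_{v_o}$, so its polynomial $D^t_v|_{\{n\equiv p\}}$ has degree strictly less than $d_{v_o}$ and contributes $0$ to the coefficient of $n^{d_{v_o}}$. Therefore the coefficient of $n^{d_{v_o}}$ in $\chi^t_\lambda|_{\{n\equiv p\}}$ equals the coefficient of $n^{d_{v_o}}$ in $D^t_{v_o}|_{\{n\equiv p\}}$, which by \eqref{eq14} is
$$
\frac{1}{d_{v_o}!}\int_{X_P^t(v_o)} c_1\!\left(\mathcal{L}(\lambda)_{|_{X_P^t(v_o)}}\right)^{d_{v_o}} \frac{e^{-pv_o\lambda}(t)}{\prod_{\alpha\in (v_oR^+_P)\backslash R((G^t)^o)}\left(1-e^{\alpha}(t)\right)}.
$$
I must check this is nonzero. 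The integral is strictly positive by Wirtinger's theorem (Remark \ref{rmk2.3}), since $\mathcal{L}(\lambda)$ is ample on $X_P$ and its restriction to the closed subvariety $X_P^t(v_o)$ is again ample. The scalar $e^{-pv_o\lambda}(t)$ is a root of unity, hence nonzero. Finally the denominator is nonzero: each factor $1-e^{\alpha}(t)$ is nonzero precisely because $\alpha \notin R((G^t)^o)$ forces $e^{\alpha}(t)\neq 1$ (this is the same observation, from Remark \ref{rmk1.4}(1) and the displayed argument around \eqref{eq13}, that $t$ acts nontrivially on $\mathfrak g_\alpha$ for such $\alpha$). Hence the leading coefficient is a nonzero complex number, and the polynomial has degree exactly $d_{v_o}$.

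The argument is essentially bookkeeping once the earlier results are granted, so there is no serious obstacle; the only point requiring a word of care is the non-cancellation of the top-degree terms, which is exactly where the uniqueness of $v_o$ is used — without it, several components of maximal dimension could in principle contribute leading terms whose sum vanishes. It is worth noting explicitly in the write-up that the positivity of the integral (rather than mere nonvanishing) is what makes the uniqueness hypothesis suffice: one does not even need to worry about the phases $e^{-pv\lambda}(t)$ of competing maximal components, since there are none. I would also remark that the degree $d_{v_o} = \max_{v\in Y_S} d_v$ is independent of $p$, so the conclusion holds uniformly across all residue classes, which is implicit in the phrasing but worth stating.
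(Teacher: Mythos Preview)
Your argument is correct and follows exactly the route the paper indicates: the corollary is stated there as an immediate consequence of Theorem \ref{thm2.1}, Corollary \ref{cry2.2}, and Remark \ref{rmk2.3}, and you have simply spelled out that deduction, including the verification that the scalar factors $e^{-pv_o\lambda}(t)$ and $\prod_\alpha (1-e^\alpha(t))$ are nonzero. The key observation you isolate---that uniqueness of $v_o$ prevents any cancellation in the top degree---is precisely why the hypothesis is imposed.
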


When $P=B$, i.e., $\lambda$ is a dominant regular highest weight, then Theorem \ref{thm2.1} specializes to the following.

\begin {corollary}\label{cry2.4}
With the notation and assumptions as in Theorem \ref{thm2.1} and with the additional assumption that $P=B$,
$$
\ch\left(t^{-1}, V(n\lambda)\right)=\sum_{v\in Y_S}\sum_{k\geqslant 0}\,
\int_{\overset{o}{X}^t} \frac{c_1\left(\mathcal{L}(v\lambda)_{|_{\overset{o}{X}^t}}\right)^k \frac{n^k}{k!} e^{-nv\lambda} (t)\td \overset{o}{X}^t}
{\prod_{\alpha\in(vR^+)\backslash \overset{o}{R}^t}\left[ 1-e ^\alpha (t) \cdot e^{-c_1 \left( \mathcal{L}(\alpha)_{ |_{\overset{o}{X}^t}}\right)}\right]},
$$
where $\overset{o}{X}^t:=(G^t)^o/B^t$ and $\overset{o}{R}^t$ denotes the set of roots of $(G^t)^o$. 
\end {corollary}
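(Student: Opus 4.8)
The plan is to specialize Theorem \ref{thm2.1} to the case $P=B$ and rewrite each summand $D_v^t(n,\lambda)$ using the structural results of Section 2. The starting point is the formula
$$
\ch\left(t^{-1},V(n\lambda)\right)=\sum_{v\in Y_S} D^t_v(n,\lambda),\qquad D^t_v(n,\lambda)= \sum_{k\geqslant 0} \int_{X_B^t(v)}\,\,\frac{c_1\left(\mathcal{L}(n\lambda)_{|_{X_B^t(v)}}\right)^k\frac{1}{k!}e^{-nv\lambda}(t)\cdot \td(X_B^t(v))}{\prod_{\alpha\in(vR^+)\backslash R((G^t)^o)}\, [1-e^\alpha(t)e^{- c_1(\overline{\mathcal{L}}(\alpha)_{|_{X_B^t(v)}})}]},
$$
which is exactly Theorem \ref{thm2.1} with $P=B$ (legitimate here since $\lambda$ is dominant regular, so $P_\lambda=B$ anyway, but in any case Remark \ref{rmk2.2} covers it). The only thing to do is to identify the geometric objects appearing in this expression with the ones in the claimed formula: namely $X_B^t(v)$ with $\overset{o}{X}^t=(G^t)^o/B^t$, the restricted line bundle $\mathcal{L}(n\lambda)_{|_{X_B^t(v)}}$ with $\mathcal{L}(nv\lambda)_{|_{\overset{o}{X}^t}}$, the index set $R((G^t)^o)$ with $\overset{o}{R}^t$, and the equivariant line bundles $\overline{\mathcal{L}}(\alpha)_{|_{X_B^t(v)}}$ with $\mathcal{L}(\alpha)_{|_{\overset{o}{X}^t}}$.

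First I would invoke Corollary \ref{cry1.2}: for $P=B$ each connected component $X_B^t(v)$ is, as a $(G^t)^o$-variety, isomorphic via $\phi_v$ to $(G^t)^o/\big((G^t)^o\cap vBv^{-1}\big)=(G^t)^o/B^t=\overset{o}{X}^t$, where the last equality uses that $vBv^{-1}$ is solvable and $B^t$ is a maximal solvable subgroup of $(G^t)^o$ (the argument already given in the proof of Corollary \ref{cry1.2}). Under this identification the base point $vB/B$ corresponds to the base point $eB^t$ of $\overset{o}{X}^t$. Next I would track the line bundles through $\phi_v$. By construction (see \eqref{eq9} and the discussion around it), the $(G^t)^o$-equivariant bundle $\hat{\mathcal{L}}(n\lambda)_{|_{X_B^t(v)}}$ — i.e. $\mathcal{L}(n\lambda)$ restricted with its equivariant structure — has fiber over $vB/B$ on which $T$ (hence $T^t\supset$ the maximal torus of $(G^t)^o$) acts by the weight $-n\,v\lambda$; pulling back along $\phi_v$ and noting the definition $\mathcal{L}_B(\mu)=G\times^B\mathbb{C}_{-\mu}$, this is precisely $\mathcal{L}\big((nv\lambda)_{|(G^t)^o}\big)$ over $(G^t)^o/B^t$. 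Hence $c_1\left(\mathcal{L}(n\lambda)_{|_{X_B^t(v)}}\right)^k=c_1\left(\mathcal{L}(nv\lambda)_{|_{\overset{o}{X}^t}}\right)^k=n^k\,c_1\left(\mathcal{L}(v\lambda)_{|_{\overset{o}{X}^t}}\right)^k$, which produces the $\frac{n^k}{k!}$ in the claimed formula. Similarly, $\overline{\mathcal{L}}(\alpha)_{|_{X_B^t(v)}}$ was defined in \eqref{eq12} as the $(G^t)^o$-equivariant line bundle with fiber of $T$-weight $-\alpha$ over $vB/B$, so under $\phi_v$ it is identified with $\mathcal{L}(\alpha)_{|_{\overset{o}{X}^t}}$, and the scalar factors $e^\alpha(t)$ are unchanged since $\phi_v$ is $T^t$-equivariant on fibers. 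Finally $\td(X_B^t(v))=\td(\overset{o}{X}^t)$ and $R((G^t)^o)=\overset{o}{R}^t$ by definition. Substituting all of this into $D_v^t(n,\lambda)$ and summing over $v\in Y_S$ yields the stated formula verbatim.

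The one point deserving care — and the main (minor) obstacle — is the bookkeeping of equivariant structures versus the underlying bundles: $\phi_v$ is an isomorphism of $(G^t)^o$-varieties, but one must be sure that it lifts to an isomorphism of $(G^t)^o$-equivariant line bundles carrying the fiber weight at $vB/B$ to the fiber weight at $eB^t$, and that the non-equivariant scalars $e^\alpha(t)$, $e^{-nv\lambda}(t)$ that appear in the Atiyah–Singer formula are read off from the action of $t$ on those fibers and so transport correctly. Since $t\in T$ and $\phi_v$ is induced by $g\mapsto g\cdot vB/B$ with $(G^t)^o$ acting on the left, $t$ acts on the fiber of $\hat{\mathcal{L}}(\mu)_{|_{X_B^t(v)}}$ over $vB/B$ by $e^{-\mu}(v^{-1}tv)=e^{-v\mu}(t)$ — note the conjugation, which is exactly why $v\lambda$ (not $\lambda$) appears — and likewise $t$ acts on the normal direction $\overline{\mathcal{L}}(\alpha)$ by $e^{\alpha}(t)$. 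So the substitution is forced and there is no genuine difficulty; I would state it as a direct consequence of Theorem \ref{thm2.1} and Corollary \ref{cry1.2}, spelling out the two or three identifications above.
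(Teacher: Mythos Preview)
Your proposal is correct and follows essentially the same route as the paper: specialize Theorem \ref{thm2.1} to $P=B$, use Corollary \ref{cry1.2} to identify each $X_B^t(v)$ with $\overset{o}{X}^t$ via $\phi_v$, and then check that $\phi_v^*\big(\mathcal{L}(\mu)_{|_{X_B^t(v)}}\big)\simeq \mathcal{L}(v\mu)_{|_{\overset{o}{X}^t}}$ and $\phi_v^*\big(\overline{\mathcal{L}}(\alpha)_{|_{X_B^t(v)}}\big)\simeq \mathcal{L}(\alpha)_{|_{\overset{o}{X}^t}}$. The paper records these two line-bundle identifications via the explicit map $[g,\mathbbm{1}_{-v\mu}]\mapsto[gv,\mathbbm{1}_{-\mu}]$, which is exactly the fiber-weight computation you spell out in words; your extra discussion of the equivariant bookkeeping is sound but not needed beyond what the paper states.
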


\begin {proof}
To deduce the corollary from Theorem \ref{thm2.1}, we need to observe that under the isomorphism of Proposition \ref{pps1.1} (for any $v\in Y_S$):
$$
\phi_v:~\overset{o}{X}^t\rightarrow X^t_B(v),\quad gB^t\mapsto gvB/B,
$$
the line bundle $\mathcal{L}(\mu)_{|_{X_B^t(v)}}$ pulls back to 
$$
\phi_v^* \left(\mathcal{L}(\mu)_{|_{X_B^t(v)}}\right)\simeq \mathcal{L}(v\mu)_{|_{\overset{o}{X}^t}}
$$
under the isomorphism
$$
\left[g,\mathbbm{1}_{-v\mu}\right] \mapsto\left[gv,\mathbbm{1}_{-\mu}\right],\quad \mbox{for } g\in(G^t)^o,
$$
where $\mathbbm{1}_{-v\mu}$ is a basis of $\mathbbm{C}_{-v\mu}$.
Moreover, 
$$
\phi_v^* \left(\overline{\mathcal{L}}(\mu)_{|_{X_B^t(v)}}\right)=\mathcal{L}(\mu)_{|_{\overset{o}{X}^t}}.
$$
\end {proof}
\begin {remark}\label{rmk2.5}
{\rm The Todd genus of any flag variety $G/P$ is determined by Brion \cite[$\S$3]{B}.} 
\end {remark}
\begin {example}\label{exm2.6}
{\rm Let the assumption be as in Theorem \ref{thm2.1}. Assume further that $t\in T$ is a regular element (i.e., $e^\alpha(t)\neq 1$ for any root $\alpha$ of $G$) of finite order $r$. Then, the function 
$n\mapsto \mbox{ch}\left(t^{-1}, V(n\lambda)\right)$ is the constant function 1, when $n$ is restricted to $r\mathbbm{Z}_{\geq0}$. }
\end{example}

Even though it follows easily from the Weyl character formula, but we deduce it from Corollary \ref{cry2.2}:

Since $t$ is regular, $Y_S=W^P$ and 
$B^t=T$, thus $X_P^t(v)=\left\{vP/P\right\}$ for any $v\in W^P$. Thus, each $d_v=0$ and by Corollary \ref{cry2.2}, we get (for $n\in
r\mathbbm{Z}_{\geq0}$) 
\begin{eqnarray*}
\mbox{ch}\left(t^{-1},V(n\lambda)\right)&=& \sum_{v\in W^P} \frac{1}{\prod_{\alpha\in v R_P^+}\left(1-e^\alpha (t)\right)}\\
&=&\sum_{v\in W^P} \frac{1}{\prod_{\alpha\in \left(vR^+_P\right)\cap R^+}\left(1-e^\alpha(t)\right)\cdot \prod_{\alpha\in \left(vR^+\right)\cap R^-}\left(1-e^\alpha(t)\right)},\\
&& \mbox{since } v\left(R^+\backslash R^+_P\right) \subset R^+\\
&=& \sum_{v\in W^P} (-1)^{\ell(v)} \frac{\prod_{\alpha\in vR^- \cap R^+} \,e^\alpha(t)}{\prod_{\alpha\in \left(vR^+_P\right)\cap R^+}\left(1-e^\alpha(t)\right)\cdot \prod_{\alpha\in vR^-\cap R^+}\left(1-e^\alpha(t)\right)}\\
&=& \sum_{v\in W^P}(-1)^{\ell(v)} \frac{e^{\rho-v\rho}(t)}{\prod_{\alpha\in R^+\backslash v\left(R^+\backslash R^+_P\right)}\left(1-e^\alpha (t)\right)}\\
&=& 1,\mbox{ by the parabolic analogue of the Weyl denominator formula.}
\end{eqnarray*}

\section{Specialization of results for an involution $t$}
In this section we consider elements $t\in T$ of order 2. As a consequence of Corollary \ref{cry2.2} in the case of involution $t$, we get the following. 

\begin {theorem}\label{thm3.1}
Follow the notation and assumptions as in Corollary \ref{cry2.2} and assume further that $t\in T$ is of order 2. Then, the function $\chi_{\lambda|\left\{ n\in 2\mathbbm{Z}_{\geq0}\right\}}^t$ is a polynomial function of degree exactly equal to $d:= max_{v\in Y_S}\left\{d_v\right\}$. 
\end {theorem}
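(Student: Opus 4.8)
The plan is to reduce Theorem \ref{thm3.1} to the degree bound already established in Corollary \ref{cry2.2}, so that the only thing left to check is that the top-degree coefficient does not vanish. By Corollary \ref{cry2.2}, the function $\chi^t_{\lambda|\{n\in 2\mathbbm{Z}_{\geq 0}\}}$ is a polynomial in $n$ of degree at most $d=\max_{v\in Y_S}\{d_v\}$, and the coefficient of $n^{d_v}$ in the summand $D_v^t(n,\lambda)$ (restricted to the relevant congruence class, here $p=0$) is given by the expression \eqref{eq14}:
$$
a_v:=\frac{1}{d_v!}\int_{X_P^t(v)} c_1\!\left(\mathcal{L}(\lambda)_{|_{X_P^t(v)}}\right)^{d_v}\cdot \frac{1}{\prod_{\alpha\in (vR^+_P)\backslash R((G^t)^o)}\left(1-e^{\alpha}(t)\right)}.
$$
Summing over the finitely many $v\in Y_S$ with $d_v=d$, the coefficient of $n^d$ in $\chi^t_{\lambda|\{n\in 2\mathbbm{Z}_{\geq 0}\}}$ is $\sum_{v:\,d_v=d} a_v$, and it suffices to show this sum is nonzero. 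The crucial point, and the reason the hypothesis $\mathrm{ord}(t)=2$ enters, is that then $e^{\alpha}(t)\in\{\pm 1\}$ for every root $\alpha$, and in fact $e^\alpha(t)=-1$ precisely for $\alpha\notin R((G^t)^o)$ (since $t$ acts on $\mathfrak{g}_\alpha$ by $e^\alpha(t)$, and $\mathfrak{g}_\alpha\subset \mathrm{Lie}(G^t)$ iff $e^\alpha(t)=1$). Hence for every $v$ the denominator factor $1-e^\alpha(t)=1-(-1)=2>0$ for each $\alpha\in (vR^+_P)\backslash R((G^t)^o)$, so
$$
\prod_{\alpha\in (vR^+_P)\backslash R((G^t)^o)}\left(1-e^{\alpha}(t)\right)=2^{\,c_v}>0,\qquad c_v:=\#\big((vR^+_P)\backslash R((G^t)^o)\big)=\dim_{\mathbb C} N_v^t,
$$
a strictly positive real number.

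Next I invoke Wirtinger's theorem as recorded in Remark \ref{rmk2.3}: since $\mathcal{L}(\lambda)$ is ample on $X_P$, its restriction to each closed subvariety $X_P^t(v)$ is ample, so the integral $\int_{X_P^t(v)} c_1(\mathcal{L}(\lambda)_{|_{X_P^t(v)}})^{d_v}$ equals the positive volume (up to the positive normalizing constant $d_v!$) and is therefore strictly positive. Combining the two positivity statements, each $a_v$ is a strictly positive real number. Therefore $\sum_{v:\,d_v=d} a_v>0$ — in particular it is nonzero — and the polynomial $\chi^t_{\lambda|\{n\in 2\mathbbm{Z}_{\geq 0}\}}$ has degree exactly $d$. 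Note that, unlike in Corollary \ref{cry2.5}, no uniqueness of the top-dimensional component is needed, because for $t$ of order $2$ the leading contributions from distinct components cannot cancel: they are all positive.

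The only genuinely substantive step is the sign analysis of the denominator, i.e. identifying $e^\alpha(t)=-1$ for all $\alpha\in (vR_P^+)\backslash R((G^t)^o)$; this I expect to be the main (though mild) obstacle. It follows from the discussion around \eqref{eq12}–\eqref{eq13}: the roots $\alpha$ appearing in the normal bundle $N_v^t$ are exactly those $T$-weights of $T_v(X_P)$ on which $t$ acts nontrivially, and since $t^2=1$ nontrivial action means action by $-1$; one should also double-check the index set $(vR_P^+)\backslash R((G^t)^o)$ is literally the set over which the product runs in \eqref{eq14}, which it is by \eqref{eq12}. Once this is in hand, the rest is the bookkeeping above: assemble the leading coefficient as a finite sum of terms $a_v=\frac{1}{d_v!}\,(\text{positive volume})\cdot 2^{-c_v}$, all positive, conclude nonvanishing, and hence exact degree $d$. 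I would close by remarking that for $t$ of order $2$ this also pins down the leading coefficient as $\sum_{v:\,d_v=d}\frac{1}{d_v!}2^{-c_v}\int_{X_P^t(v)} c_1(\mathcal{L}(\lambda)_{|_{X_P^t(v)}})^{d_v}$, an explicit positive rational combination of intersection numbers on the fixed-point components.
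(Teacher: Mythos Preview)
Your proposal is correct and follows essentially the same argument as the paper: use Corollary \ref{cry2.2} to write the top coefficient, note that $e^\alpha(t)=-1$ for $\alpha\notin R((G^t)^o)$ so each denominator is a positive power of $2$, invoke Remark \ref{rmk2.3} (Wirtinger) for positivity of the integrals, and conclude that the contributions from the top-dimensional components are all strictly positive and hence cannot cancel. Your write-up is in fact a bit more explicit than the paper's in spelling out why $e^\alpha(t)=-1$ on the relevant index set and in isolating the sum over $\{v:d_v=d\}$.
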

\begin {proof}
By Corollary  \ref{cry2.2}, the coefficient of $n^{d_v}$ in $D_v^t(n,\lambda)$ restricted to $2\mathbbm{Z}_{\geq0}$ is equal to 
$$
\frac{1}{d_v!}\int_{X^t_P(v)}  c_1\left( \mathcal{L}(\lambda)_{|_{X^t_P(v)}}\right)^{d_v}\cdot \frac{1}{\prod_{\alpha\in\left(vR^+_P\right)\backslash R\left(\left(G^t\right)^o\right)} \left(1-e^\alpha(t)\right)}.
$$
But since $t$ is of order $2$, $e^\alpha(t)=\pm1$. 
Moreover, for $\alpha\notin R\left(\left(G^t\right)^o\right)$, $e^\alpha(t)=-1$. 
Thus, the above sum reduces to 
$$
\frac{1}{d_v!}\int_{X_P^t(t)} \,c_1\left(\mathcal{L}(\lambda)_{|_{X^t_P(v)}}\right)^{d_v}\cdot 2^{ -\# \left(vR_P^+\backslash R\left(\left(G^t\right)^o\right)\right)}.
$$
By Wirtinger’s Theorem (cf. Remark 2.3), the integral $\int_{X_P^t(v)} c_1 \left(\mathcal{L}(\lambda)_{|_{X^t_P(v)}}\right)^{d_v}>0$. 

Since, by Theorem \ref{thm2.1},
$$
\chi_\lambda^t (n) =\sum_{v\in Y_S}  D^t_v\left(n, \lambda\right),
$$
the theorem follows. 
\end {proof}
\begin{example}\label{exm3.2} Consider $G=\GL_n(\mathbbm{C})$ and $t=\left((+1)^m,(-1)^{n-m}\right)$ as in Example \ref{exm1.3}. In the case $n=2m$ and $m$ odd, using Theorem \ref{thm2.1}, it can be seen that for 
	$$\lambda:= \left(\lambda_1=\lambda_2=\dots =\lambda_m\geq \lambda_{m+1}=\lambda_{m+2}=\dots =\lambda_n\right)$$
	 with $\lambda_1$ and $\lambda_{m+1}$ of opposite parity, 
$$ \ch(t, V(\lambda))= 0.
$$
To prove this observe that, by Example \ref{exm1.3}, $X_P^t= \mathbbm{P}^{m-1}\sqcup \mathbbm{P}^{m-1}$. Moreover, in this case, 
$Y_S= \{1, v_o\}$, where $v_o$ is the cycle $(1, n, n-1, n-2, \dots, 2)$. Further, $e^{-\lambda}(t) = - e^{-v_o\lambda}(t)$.

A similar result can be obtained for $\Sp(2n)$ and $\SO(n)$.

\end{example}
\vskip9ex

\begin {thebibliography}{00}

\bibitem[AS]{AS} M. F. Atiyah and I. M. Singer, The index of elliptic operators III, Annals of Math., vol. {\bf 87} (1968), 546--604.

  \bibitem[AK]{AK} A. Ayyer and N. Kumari, {\it Factorization of Classical characters Twisted by Roots of Unity.} 
    J. Algebra {\bf 609} (2022), 437--483.

    \bibitem[B]{B} M. Brion, The push-forward and Todd class of flag bundles, Banach Center Publications, Warszawa, volume {\bf 36} (1996),
    45--50.

\bibitem[F]{F} W. Fulton, {\it Intersection Theory}, Second Edition, Springer (1998). 

\bibitem[GH]{GH} P. Griffiths and J. Harris, {\it Principles of Algebraic Geometry}, Wiley Interscience (1994). 

\bibitem[H]{H} N. Hitchin, Memories of Sir Michael Atiyah, Notices of AMS {\bf 66}  (2019), no. 11, 1834--1852.

\bibitem[Ka]{Ka} C. Karmakar, Computation of characters of classical groups at certain elements of order $2$, Preprint (2024). 

\bibitem[K]{K} S. Kumar, {\it Kac-Moody Groups, their Flag Varieties and Representation Theory}, Progress in Mathematics vol. {\bf 204}, Birkh\"auser (2002). 

\bibitem[NPP]{NPP} S. Nadimpalli, S. Pattanayak, and D.Prasad, {\it A product formula for the principal ${\SL}_2$}; preprint.
  
\bibitem[P]{P} D. Prasad, {\it A character relationship on $\text{GL}(n,\mathbb{C})$.} Israel J. Math. {\bf 211} (2016), no. 1, 257--270.

\end {thebibliography}
\end{document}